\newtheorem{theorem}{Theorem}[section]
\newtheorem{lemma}[theorem]{Lemma}
\newtheorem{proposition}[theorem]{Proposition}
\newtheorem{corollary}[theorem]{Corollary}
\theoremstyle{definition}
\newtheorem*{definition*}{Definition}
\newtheorem{definition}[theorem]{Definition}
\theoremstyle{remark}
\newtheorem*{remark*}{Remark}
\numberwithin{equation}{section}
\begin{document}

\title[DQ via Toeplitz Operators on GQ in Real Polarizations]{Deformation Quantization via Toeplitz Operators on Geometric Quantization in Real Polarizations}

\author{NaiChung Conan Leung, AND YuTung Yau}

\thanks{}

\maketitle

\begin{abstract}
	In this paper, we study quantization on a compact integral symplectic manifold $X$ with transversal real polarizations. In the case of complex polarizations, namely $X$ is K\"ahler equipped with transversal complex polarizations $T^{1, 0}X, T^{0, 1}X$, geometric quantization gives $H^0(X, L^{\otimes k})$'s. They are acted upon by $\mathcal{C}^\infty(X, \mathbb{C})$ via Toeplitz operators as $\hbar = \tfrac{1}{k} \to 0^+$, determining a deformation quantization $(\mathcal{C}^\infty(X, \mathbb{C})[[\hbar]], \star)$ of $X$.\par
	We investigate the real analogue to these, comparing deformation quantization, geometric quantization and Berezin-Toeplitz quantization. The techniques used are different from the complex case as distributional sections supported on Bohr-Sommerfeld fibres are involved.\par
	By switching the roles of the two real polarizations, we obtain Fourier-type transforms for both deformation quantization and geometric quantization, and they are compatible asymptotically as $\hbar \to 0^+$. We also show that the asymptotic expansion of traces of Toeplitz operators realizes a trace map on deformation quantization.
\end{abstract}

\address{}
 \email {}

\section{Introduction}
\label{Section 1}
The term `quantization' originates from physics, referring to the process of building quantum mechanics from classical mechanics. Consider the simplest example $X = T^*V$ with the canonical symplectic form $\omega = \sum_{i=1}^n dx^i \wedge dy^i$, where $(x^1, ..., x^n)$ are the standard coordinates on $V = \mathbb{R}^n$ and $(y^1, ..., y^n)$ are the dual coordinates on $V^*$. Via the action $x^i \mapsto x^i \cdot$, $y^j \mapsto \tfrac{\hbar}{2\pi \sqrt{-1}} \partial_{x^j}$ on $\mathcal{H} = L^2(V)$, there induces a non-commutative product $\star$ on $\mathcal{C}^\infty(X, \mathbb{C})[[\hbar]]$ with $\hbar^{-1}(f \star g - g \star f) = \tfrac{\sqrt{-1}}{2\pi}\{f, g\}$.\par
In the general case, the construction of the quantum Hilbert space $\mathcal{H}$ requires the condition $[\omega] = c_1(L) \in H^2(X, \mathbb{Z})$ for a Hermitian line bundle $L$ and a polarization of $(X, \omega)$. When $X$ is compact K\"ahler, there are transversal complex polarizations $T^{1, 0}X, T^{0, 1}X$. Geometric quantization gives the quantum Hilbert space $\mathcal{H}^\hbar = H^0(X, L^{\otimes k})$ in polarization $T^{1, 0}X$, where $\hbar = \tfrac{1}{k}$ and $k \in \mathbb{N}$. In \cite{S2000}, Schlichenmaier shows that the \emph{Toeplitz operators} $Q^\hbar: \mathcal{C}^\infty(X, \mathbb{C}) \to \operatorname{End}_\mathbb{C} \mathcal{H}^\hbar$ completely determine a deformation quantization $(\mathcal{C}^\infty(X, \mathbb{C})[[\hbar]], \star^{\operatorname{BT}})$ known as the \emph{Berezin-Toeplitz star product} so that it acts on $\mathcal{H}^\hbar$ asymptotically as $\hbar \mapsto 0^+$.\par
In this paper, we study the real analogue to Toeplitz operators, assuming $X$ is compact and is equipped with transversal real polarizations with compact leaves. Essentially, up to finite covers, $X = (V \oplus V^*) / (\Lambda \oplus \Lambda')$ with standard symplectic form, where $V \cong \mathbb{R}^n$ and $\Lambda, \Lambda'$ are lattices in $V, V^*$ respectively (c.f. Appendix \ref{Appendix A}). For simplicity, throughout of the rest of this paper except Section \ref{Section 2}, let $X = T \times \check{T}$, where $T = \check{T} = \mathbb{R}^n/\mathbb{Z}^n$, with the canonical symplectic form $\omega$ on $X$ viewed as the dual torus bundle over $T$. Define real polarizations $\mathcal{P}^T = (\ker d\check{\mu})_\mathbb{C}$ and $\mathcal{P}^{\check{T}} = (\ker d\mu)_\mathbb{C}$, where $\mu: X \to T$ and $\check{\mu}: X \to \check{T}$ are canonical projections. We construct a Toeplitz-type operator
\begin{align*}
	Q^\hbar: \mathcal{C}^\infty(X, \mathbb{C}) \to \operatorname{End}_\mathbb{C} \mathcal{H}^\hbar
\end{align*}
acting on the quantum Hilbert space $\mathcal{H}^\hbar$ in real polarization $\mathcal{P}^T$, together with a deformation quantization $\star$ on $(X, \omega)$ \emph{with separation of variables} in $(\mathcal{P}^{\check{T}}, \mathcal{P}^T)$ in the sense of Definition \ref{Definition 3.1}. For $f, g \in \mathcal{C}^\infty(X, \mathbb{C})$, let $f \star_N g$ be the $N$th order truncation of the power series $f \star g$ in $\hbar$. Our first main result shows that $(\mathcal{C}^\infty(X, \mathbb{C})[[\hbar]], \star)$ acts on $\mathcal{H}^\hbar$ asymptotically as $\hbar = \tfrac{1}{k} \to 0^+$.

\begin{theorem}
	\label{Theorem 1.1}
	For all $f, g \in \mathcal{C}^\infty(X, \mathbb{C})$ and $N \in \mathbb{N} \cup \{0\}$, there exists $K_N(f, g) > 0$ such that
	\begin{equation}
	\label{Equation 1.1}
	\left\lVert Q_f^\hbar \circ Q_g^\hbar - Q_{f \star_N g}^\hbar \right\rVert \leq K_N(f, g) \hbar^{N+1}
	\end{equation}
	for all $k \in \mathbb{N}$, where $\hbar = \tfrac{1}{k}$.
\end{theorem}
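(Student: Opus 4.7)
The plan is to reduce the estimate to Fourier modes on the torus $X = T \times \check{T}$ and to exploit the simple behaviour of both the Toeplitz-type operator $Q^\hbar$ and the star product $\star$ on characters.

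\textbf{Concrete model for $\mathcal{H}^\hbar$ and $Q^\hbar$ on characters.} First I would make $\mathcal{H}^\hbar$ fully explicit. The Bohr-Sommerfeld fibres of $\check{\mu}: X \to \check{T}$ are the $k^n$ points $\alpha/k \in \tfrac{1}{k}\mathbb{Z}^n/\mathbb{Z}^n$, so $\mathcal{H}^\hbar$ has a distinguished basis $\{e_\alpha\}_{\alpha \in (\mathbb{Z}/k\mathbb{Z})^n}$ of distributional $L^{\otimes k}$-sections supported on them. For a pure character
\[
\chi_{m, n}(x, y) = e^{2\pi i (\langle m, x\rangle + \langle n, y\rangle)}, \qquad (m, n) \in \mathbb{Z}^n \oplus \mathbb{Z}^n,
\]
I would unpack the pairing definition of $Q^\hbar$ to show that $Q^\hbar_{\chi_{m, n}}$ acts as a weighted shift $e_\alpha \mapsto e^{2\pi i \phi_\hbar(\alpha, m, n)}\, e_{\alpha + n}$ with unimodular phase $\phi_\hbar$. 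Crucially, this forces $\|Q^\hbar_{\chi_{m,n}}\| = 1$, uniformly in $\hbar = 1/k$.

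\textbf{Exact composition on characters.} Composing two such operators gives another weighted shift, whose total phase I would compare with the Fourier expression for $\chi_{m, n} \star \chi_{m', n'}$ produced by separation of variables in $(\mathcal{P}^{\check{T}}, \mathcal{P}^T)$. Because only $y$-derivatives of the first argument contract with $x$-derivatives of the second, $\chi_{m, n} \star \chi_{m', n'}$ reduces to a closed-form power series in $\hbar$ multiplying $\chi_{m + m', n + n'}$. Matching term by term, I expect
\[
Q^\hbar_{\chi_{m, n}} \circ Q^\hbar_{\chi_{m', n'}} = Q^\hbar_{\chi_{m, n} \star \chi_{m', n'}}
\]
to hold exactly, so that truncating at order $N$ produces a remainder whose operator norm is bounded by $C_N \hbar^{N + 1} (|n||m'|)^{N + 1}$, still uniformly in $k$ by the unit-norm property of Step 1.

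\textbf{Extension to general $f, g$.} For $f, g \in \mathcal{C}^\infty(X, \mathbb{C})$, I would Fourier-expand $f = \sum \hat{f}(m, n) \chi_{m, n}$ and likewise $g$, then assemble
\begin{equation*}
\left\lVert Q_f^\hbar \circ Q_g^\hbar - Q_{f \star_N g}^\hbar \right\rVert \leq C_N \hbar^{N + 1} \sum_{(m, n), (m', n')} |\hat{f}(m, n)| |\hat{g}(m', n')| (|n| |m'|)^{N + 1}.
\end{equation*}
Smoothness of $f, g$ gives faster-than-polynomial decay of $\hat{f}, \hat{g}$, so the right-hand side sums to a finite constant $K_N(f, g)$ and \eqref{Equation 1.1} follows. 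The main obstacle is Step 1: rigorously showing that $Q^\hbar_{\chi_{m, n}}$ is a unit-norm weighted shift on the Bohr-Sommerfeld basis, with a clean closed form for the phase. This requires careful manipulation of distributional pairings against $\mathcal{P}^T$-polarized sections, and is where the real-polarization argument genuinely diverges from Schlichenmaier's K\"ahler proof, since the quantum states now live in a space of distributions rather than smooth holomorphic sections. Once that norm uniformity is secured, the polynomial growth of $\dim \mathcal{H}^\hbar = k^n$ cannot contaminate the estimate, and the remainder of the argument is a routine Fourier-decay calculation.
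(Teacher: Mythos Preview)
Your approach is correct and would prove the theorem, but it takes a genuinely different route from the paper's.

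The paper does \emph{not} expand in full Fourier modes on $X$. It uses only the \emph{fibrewise} Fourier transform in the $x$-variables, writing $f(x,y)=\sum_m \widehat{f}_m(y)e^{2\pi i m\cdot x}$ with $\widehat{f}_m\in\mathcal{C}^\infty(\check T)$. The explicit formula (6.6) then gives $Q_f^\hbar$ as a matrix in the Bohr--Sommerfeld basis with entries built from the values $\widehat f_{m-m'}(\hbar m')$, and the difference $Q_f^\hbar Q_g^\hbar - Q_{f\star_N g}^\hbar$ is rewritten so that each matrix entry contains the order-$(N{+}1)$ \emph{Taylor remainder} of $\widehat f_q(\,\cdot\,)$ in the continuous variable $y$. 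These remainders are controlled via Lemma~5.3 (uniform rapid decay of $\widehat f_m^{(I)}(y)$ in $m$); the paper then bounds the matrix in the $\ell^1$ and $\ell^\infty$ operator norms separately and invokes the interpolation inequality $\|A\|\le\sqrt{\|A\|_1\|A\|_\infty}$ (Lemma~6.5) to get the Hilbert-space norm. Your route---full Fourier expansion in both $x$ and $y$, exact multiplicativity $Q^\hbar_{\chi}\circ Q^\hbar_{\chi'}=Q^\hbar_{\chi\star\chi'}$ on characters, then an exponential-tail remainder summed against rapidly decaying coefficients---is more elementary and sidesteps both the Taylor expansion and the $\ell^1/\ell^\infty$ interpolation, at the price of being specific to the case where \emph{both} factors are tori. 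The paper's partial-Fourier argument stays closer to the Weil--Brezin framework it has built, and the same Taylor-remainder mechanism is reused for Theorems~1.2 and~1.3.

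Two small corrections to your Step~1. First, the Bohr--Sommerfeld fibres of $\check\mu$ are $n$-tori (the $x$-slices over $y=\alpha/k$), not points; the $k^n$ points sit in $\check T$. Second, from (6.6) one computes $Q^\hbar_{\chi_{m,n}}\sigma_\hbar^{[\alpha]}=e^{2\pi i\hbar\, n\cdot\alpha}\,\sigma_\hbar^{[\alpha+m]}$, so the shift is by the $x$-Fourier index $m$, not $n$. This does not damage your estimate: the star-product phase is $e^{2\pi i\hbar\, n\cdot m'}$, so the truncation error on characters is indeed bounded by $C_N\hbar^{N+1}|n\cdot m'|^{N+1}$, and your final sum converges exactly as you say.
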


In addition, it is shown in Corollary \ref{Corollary 6.4} that our construction of the Toeplitz-type operator $Q^\hbar$ gives rise to representations of the quantum tori induced by the symplectic structure $\omega$ on $X$. This resulting representation is also seen in \cite{AZ2005}.\par
There are two main differences to the compact K\"ahler case. First, we need an appropriate extension of the $L^2$ inner product of smooth sections of the prequantum line bundle $L^{\otimes k}$ for defining the operators $Q^\hbar$, in which the quantum Hilbert space $\check{\mathcal{H}}^\hbar$ in the real polarization $\mathcal{P}^{\check{T}}$ transversal to $\mathcal{P}^T$ plays a crucial role. Second, our proof makes use of the estimation of the decay rates of Fourier coefficients of smooth functions on $X$.\par
If we reverse the roles of $\mathcal{P}^T$ and $\mathcal{P}^{\check{T}}$, we obtain another star product $\check{\star}$ and Toeplitz-type operators $\check{Q}^\hbar: \mathcal{C}^\infty(X, \mathbb{C}) \to \operatorname{End}_\mathbb{C} \check{\mathcal{H}}^\hbar$, `opposite to' $\star$ and $Q^\hbar$ respectively. There are Fourier-type transformations relating quantizations using $\mathcal{P}^T$ and $\mathcal{P}^{\check{T}}$ for both deformation quantization $\mathcal{F}^{\operatorname{DQ}}: (\mathcal{C}^\infty(X, \mathbb{C})[[\hbar]], \check{\star}) \overset{\cong}{\to} (\mathcal{C}^\infty(X, \mathbb{C})[[\hbar]], \star)$ and geometric quantization $B^\hbar: \check{\mathcal{H}}^\hbar \to \mathcal{H}^\hbar$ (see Proposition \ref{Proposition 3.3} and (\ref{Equation 6.8})). It is natural to ask whether these Fourier-type transforms are compatible in the sense that the following diagram commutes:
\begin{equation}
\label{Equation 1.2}
\begin{tikzcd}
(\mathcal{C}^\infty(X, \mathbb{C})[[\hbar]], \check{\star}) \ar[r, "\check{Q}^\hbar"] \ar[d, "\mathcal{F}^{\operatorname{DQ}}"'] & (\operatorname{End}_\mathbb{C} \check{\mathcal{H}}^\hbar, \circ) \ar[d, "\mathcal{F}^{\operatorname{GQ}}"]\\
(\mathcal{C}^\infty(X, \mathbb{C})[[\hbar]], \star) \ar[r, "Q^\hbar"'] & (\operatorname{End}_\mathbb{C} \mathcal{H}^\hbar, \circ)
\end{tikzcd}
\end{equation}
where $\mathcal{F}^{\operatorname{GQ}}$ is the conjugation by $B^\hbar$. Note that $\hbar = \tfrac{1}{k}$ with $k \in \mathbb{N}$ for geometric quantization whilst $\hbar$ is only a formal variable for deformation quantization. Let $\mathcal{F}_N^{\operatorname{DQ}}$ be the $N$th order truncation of the formal operator $\mathcal{F}^{\operatorname{DQ}}$ in $\hbar$. Our second main result asserts that (\ref{Equation 1.2}) commutes asymptotically as $\hbar \to 0^+$.


\begin{theorem}
	\label{Theorem 1.2}
	For all $f \in \mathcal{C}^\infty(X, \mathbb{C})$ and $N \in \mathbb{N} \cup \{0\}$, there exists $K > 0$ such that
	\begin{equation}
	\label{Equation 1.3}
	\lVert (\mathcal{F}^{\operatorname{GQ}} \circ \check{Q}^\hbar)(f) - (Q^\hbar \circ \mathcal{F}_N^{\operatorname{DQ}})(f) \rVert \leq K \hbar^{N+1}.
	\end{equation}
	for all $k \in \mathbb{N}$, where $\hbar = \tfrac{1}{k}$.
\end{theorem}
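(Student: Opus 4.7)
The plan is to exploit the torus structure of $X = T \times \check{T}$ and analyze diagram (\ref{Equation 1.2}) on Fourier modes $e_{m,n}(x, y) = e^{2\pi\sqrt{-1}(m \cdot x + n \cdot y)}$, $(m,n) \in \mathbb{Z}^n \oplus \mathbb{Z}^n$. Any $f \in \mathcal{C}^\infty(X, \mathbb{C})$ expands absolutely as $f = \sum_{m,n} \hat{f}(m,n) e_{m,n}$ with $|\hat{f}(m,n)|$ decaying faster than any polynomial in $|(m,n)|$, so a termwise estimate on each $e_{m,n}$ will recombine to the desired bound, in the same spirit as the Fourier-decay argument behind Theorem \ref{Theorem 1.1}.

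First I would compute each of the four operators in the square explicitly on a Fourier mode. Both $Q^\hbar(e_{m,n})$ and $\check{Q}^\hbar(e_{m,n})$ should act on their Bohr-Sommerfeld bases of $\mathcal{H}^\hbar$ and $\check{\mathcal{H}}^\hbar$ as lattice translations dictated by $(m,n)$, dressed with phases linear in $\hbar$; this is the quantum torus representation identified in Corollary \ref{Corollary 6.4}. The geometric Fourier transform $B^\hbar: \check{\mathcal{H}}^\hbar \to \mathcal{H}^\hbar$ is a finite discrete Fourier transform between the two Bohr-Sommerfeld lattices, so $\mathcal{F}^{\operatorname{GQ}} = B^\hbar \circ (\cdot) \circ (B^\hbar)^{-1}$ interchanges translations in the two directions modulo explicit phases that can be read off. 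On the other hand, because $\star$ and $\check{\star}$ are star products with separation of variables in transverse polarizations and $e_{m,n}$ is a joint eigenfunction of both polarizations, each coefficient in the formal series $\mathcal{F}^{\operatorname{DQ}}(e_{m,n})$ reduces to a polynomial in $(m,n)$ times $e_{m,n}$ and admits a closed-form resummation.

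Second I would verify the exact intertwining on each Fourier mode,
\begin{equation*}
\mathcal{F}^{\operatorname{GQ}}\bigl(\check{Q}^\hbar(e_{m,n})\bigr) = Q^\hbar\bigl(\mathcal{F}^{\operatorname{DQ}}(e_{m,n})\bigr)\Big|_{\hbar = 1/k},
\end{equation*}
where on the right the formal power series is resummed in closed form and then evaluated. This is the heart of the argument: tested against a Bohr-Sommerfeld basis vector, both sides are translations by the same lattice vector weighted by phases that have to agree by the Heisenberg-Weil calculus on $T \times \check{T}$. The difference between $\mathcal{F}^{\operatorname{DQ}}(e_{m,n})$ and its $N$th truncation $\mathcal{F}_N^{\operatorname{DQ}}(e_{m,n})$ is then a Taylor tail controlled pointwise by $C(m,n)\hbar^{N+1}$, with $C(m,n)$ of polynomial growth in $(m,n)$.

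Finally I would sum over Fourier modes, using the uniform bound $\|Q^\hbar(e_{m,n})\| \leq 1$ for torus Toeplitz operators on the Bohr-Sommerfeld basis together with the rapid decay of $\hat{f}$, to conclude
\begin{equation*}
\|(\mathcal{F}^{\operatorname{GQ}} \circ \check{Q}^\hbar)(f) - (Q^\hbar \circ \mathcal{F}_N^{\operatorname{DQ}})(f)\| \leq \Bigl(\sum_{m,n} |\hat{f}(m,n)| \, C(m,n)\Bigr)\hbar^{N+1},
\end{equation*}
and the sum in brackets is finite, serving as the constant $K$. The principal obstacle is the bookkeeping behind the intertwining identity on each mode: it requires carefully aligning the normalization of $B^\hbar$ and the polarization conventions used to build $(\mathcal{H}^\hbar, \check{\mathcal{H}}^\hbar)$ with the separation-of-variables convention defining $(\star, \check{\star})$, so that the two independently produced phases in fact cancel. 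Once that matching is pinned down, the remaining estimates reduce to the torus Fourier analysis that already underpins Theorem \ref{Theorem 1.1}.
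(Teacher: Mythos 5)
Your proposal is correct, but it takes a genuinely different route from the paper's proof. You decompose $f$ into full double Fourier modes $e_{m,n}$; since $\Delta e_{m,n} = -2\pi\sqrt{-1}\,(m\cdot n)\,e_{m,n}$, the formal transform resums to $\mathcal{F}^{\operatorname{DQ}}(e_{m,n}) = e^{2\pi\sqrt{-1}\hbar\, m\cdot n} e_{m,n}$, and with the paper's formulas (\ref{Equation 6.6}), (\ref{Equation 6.7}), (\ref{Equation 6.8}) one checks directly that $\mathcal{F}^{\operatorname{GQ}}(\check{Q}^\hbar_{e_{m,n}})\sigma_\hbar^{[m']} = e^{2\pi\sqrt{-1}\hbar\, n\cdot(m'+m)}\sigma_\hbar^{[m'+m]} = Q^\hbar_{\mathcal{F}^{\operatorname{DQ}}(e_{m,n})}\sigma_\hbar^{[m']}$, so your exact mode-by-mode intertwining does hold and the discrepancy in (\ref{Equation 1.2}) is purely a truncation effect; the error per mode is the scalar Taylor tail $\bigl\lvert e^{2\pi\sqrt{-1}\hbar m\cdot n} - \sum_{j\leq N}\tfrac{(2\pi\sqrt{-1}\hbar m\cdot n)^j}{j!} \bigr\rvert \leq \tfrac{(2\pi\lvert m\cdot n\rvert)^{N+1}}{(N+1)!}\hbar^{N+1}$ times the unitary operator $Q^\hbar_{e_{m,n}}$ (a phase-twisted shift in the orthonormal Bohr--Sommerfeld basis, hence of norm $1$), and rapid decay of $\hat f$ makes the sum over modes converge to a $k$-independent constant $K$. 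The paper instead works only with the fibrewise Fourier coefficients $\widehat f_r(y)$ in the $x$-direction, applies Taylor's theorem with integral remainder to $\widehat f_r$ as a function of $y$, and bounds the resulting error operator through the ad hoc norms $\lVert\cdot\rVert_1$, $\lVert\cdot\rVert_\infty$ combined with the interpolation Lemma \ref{Lemma 6.5}. Your route buys a cleaner structural statement (exact commutativity of the diagram on each mode, tied to the quantum torus relations of Corollary \ref{Corollary 6.4}) and avoids the norm-interpolation step via unitarity and the triangle inequality; the paper's route reuses verbatim the remainder-term template of Theorem \ref{Theorem 1.1} and never needs the closed-form resummation. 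To make your plan airtight you should record two small justifications: that $\{\sigma_\hbar^{[m]}\}$ is orthonormal so that each $Q^\hbar_{e_{m,n}}$ is indeed unitary (the paper uses the same fact implicitly when invoking Lemma \ref{Lemma 6.5}), and that interchanging the mode sum with $Q^\hbar$, $\check Q^\hbar$ and the differential operator $\mathcal{F}_N^{\operatorname{DQ}}$ is legitimate, which follows from boundedness of the Toeplitz operators and absolute convergence of the Fourier series of $f$ and its derivatives.
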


Deformation quantization on a $2n$-dimensional compact symplectic manifold always has a trace map
\begin{align*}
	\operatorname{Tr}: \mathcal{C}^\infty(X, \mathbb{C})[[\hbar]] \to \hbar^{-n} \mathbb{C}[[\hbar]]
\end{align*}
having the defining properties that $\operatorname{Tr}(f \star g) = \operatorname{Tr}(g \star f)$ and $\operatorname{Tr}(f) = \int_X f e^{\omega/\hbar} + \operatorname{O}(\hbar^{1-n})$ \cite{NT1995}. Physically, it is the partition function of quantum observables. This trace map is recovered by Grady-Li-Li \cite{GLL2017} via \emph{Batalin-Vilkovisky quantization} \cite{BV1984} in Costello-Gwilliam approach \cite{C2011, CG2017}. In K\"ahler case, it is shown in \cite{BMS1994} that for all $f \in \mathcal{C}^\infty(X, \mathbb{C})$, 
\begin{equation}
\operatorname{tr} Q_f^\hbar = \hbar^{-n} \left( \int_X f e^\omega + \operatorname{O}(\hbar) \right),
\end{equation}
and its asymptotic expansion gives a trace map for the Berezin-Toeplitz star product $\star^{\operatorname{BT}}$ \cite{S2000}. Our third main result gives the real counterpart of the above statements.

\begin{theorem}
	\label{Theorem 1.3}
	Let $X = T \times \check{T}$, where $T$ is an affine torus with dual torus $\check{T}$ and $\omega$ be the standard symplectic form on $X$. If $f \in \mathcal{C}^\infty(X, \mathbb{C})$, then
	\begin{equation}
	\operatorname{tr}( Q_f^\hbar ) = \hbar^{-n} \left( \int_X fe^\omega + \operatorname{O}(\hbar^\infty) \right),
	\end{equation}
	and its asymptotic expansion is a trace map for the star product $\star$.
\end{theorem}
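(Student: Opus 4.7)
The plan is to exploit the very concrete description of the quantum Hilbert space $\mathcal{H}^\hbar$ in the real polarization $\mathcal{P}^T$: it admits a distinguished basis indexed by the Bohr--Sommerfeld fibres of $\check{\mu}: X \to \check{T}$, which for $X = T \times \check{T}$ are precisely the tori $T \times \{\check{p}\}$ with $\check{p} \in \tfrac{1}{k}\mathbb{Z}^n/\mathbb{Z}^n$. In particular $\dim_\mathbb{C} \mathcal{H}^\hbar = k^n = \hbar^{-n}$, which already accounts for the prefactor in the claimed asymptotics. The first step is to read off, from the construction of $Q^\hbar$ developed earlier in the paper, an explicit formula for the diagonal matrix coefficients $\langle Q_f^\hbar \delta_{\check{p}}, \delta_{\check{p}} \rangle$ of the Bohr--Sommerfeld basis $\{\delta_{\check{p}}\}$. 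By the separation-of-variables property of $\star$ in $(\mathcal{P}^{\check{T}}, \mathcal{P}^T)$ and the fact that $\mathcal{P}^T$-flat sections are characterised fibrewise, each such diagonal entry should reduce to the average of $f$ along the fibre $T \times \{\check{p}\}$, so that
\[ \operatorname{tr}(Q_f^\hbar) \;=\; \sum_{\check{p} \in \frac{1}{k}\mathbb{Z}^n/\mathbb{Z}^n} \int_{T \times \{\check{p}\}} f. \]

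The second step is a Poisson-summation / Fourier-decay argument. Expanding $f$ as a Fourier series on $T \times \check{T} = (\mathbb{R}/\mathbb{Z})^{2n}$ and substituting into the above sum, the averaging over $T$ kills all Fourier modes with nonzero frequency in the $T$-direction, while the discrete sum over $\tfrac{1}{k}\mathbb{Z}^n/\mathbb{Z}^n$ kills all $\check{T}$-frequencies outside the sublattice $k\mathbb{Z}^n$ and multiplies the surviving ones by $k^n$. This yields
\[ \operatorname{tr}(Q_f^\hbar) \;=\; k^n \sum_{\nu \in k\mathbb{Z}^n} \widehat{f}(0, \nu) \;=\; \hbar^{-n} \int_X f\, e^\omega \;+\; k^n \!\!\!\sum_{\nu \in k\mathbb{Z}^n \setminus \{0\}}\!\! \widehat{f}(0, \nu). \]
Because $f$ is smooth, $\widehat{f}(0, \nu)$ decays faster than any polynomial in $|\nu|$, and for $\nu \in k\mathbb{Z}^n \setminus \{0\}$ one has $|\nu| \geq k$; the remaining sum is therefore $O(k^{-N}) = O(\hbar^N)$ for every $N$, which is the required $O(\hbar^\infty)$ error.

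For the trace-map statement, combine the above with Theorem \ref{Theorem 1.1}. The inequality (\ref{Equation 1.1}) together with $\dim_\mathbb{C} \mathcal{H}^\hbar = \hbar^{-n}$ and the crude bound $|\operatorname{tr}(A)| \leq (\dim \mathcal{H}^\hbar) \lVert A \rVert$ gives
\[ \bigl| \operatorname{tr}(Q_f^\hbar \circ Q_g^\hbar) - \operatorname{tr}(Q_{f \star_N g}^\hbar) \bigr| \;\leq\; K_N(f,g)\,\hbar^{N+1-n}. \]
Feeding the just-proved asymptotic for the trace into the right-hand side and then using the tautological symmetry $\operatorname{tr}(Q_f^\hbar \circ Q_g^\hbar) = \operatorname{tr}(Q_g^\hbar \circ Q_f^\hbar)$, one concludes that the formal series extracted from the asymptotic expansion of $\operatorname{tr}(Q^\hbar_\bullet)$—which here collapses to the single term $\operatorname{Tr}(f) = \hbar^{-n}\int_X f\,e^\omega$ modulo $\hbar^\infty$—satisfies $\operatorname{Tr}(f \star g) = \operatorname{Tr}(g \star f)$ order by order in $\hbar$, so it is a trace map for $\star$.

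The main obstacle I anticipate is the first step: pinning down the exact formula for the diagonal matrix elements in the distributional setting. Because $\mathcal{H}^\hbar$ consists of distributional sections supported on Bohr--Sommerfeld fibres, the pairing defining $Q^\hbar$ depends on the extension of the $L^2$ inner product via $\mathcal{P}^{\check{T}}$ highlighted in the introduction, and one must check carefully that after this extension the diagonal entries really do collapse to clean $T$-averages rather than picking up spurious off-fibre contributions. Once this bookkeeping is in place, the Fourier-decay estimate and the trace-map deduction are essentially mechanical.
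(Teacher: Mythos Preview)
Your overall strategy---compute $\operatorname{tr}(Q_f^\hbar)$ from the explicit matrix of $Q_f^\hbar$ in the Bohr--Sommerfeld basis and then invoke rapid Fourier decay---is exactly the paper's approach. However, your first step contains a genuine error that you yourself flag as the ``main obstacle''. The diagonal entries do \emph{not} collapse to clean $T$-averages: from formula (\ref{Equation 6.6}) one reads off
\[
(Q_f^\hbar)_{[m],[m]} \;=\; \sum_{p \in k\mathbb{Z}^n} \widehat{f}_p(\hbar m),
\]
not just $\widehat{f}_0(\hbar m)=\int_T f(x,\hbar m)\,d^nx$. The reason is aliasing: for $p\in k\mathbb{Z}^n$ the function $e^{2\pi\sqrt{-1}\,p\cdot x}$ acts as the identity on $\mathcal{H}^\hbar$ (the shift $[m]\mapsto[m-p]$ is trivial modulo $k$), so every $T$-mode in the sublattice $k\mathbb{Z}^n$ contributes to the diagonal. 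Consequently the exact trace is
\[
\operatorname{tr}(Q_f^\hbar)\;=\;\sum_{[m]\in\mathbb{Z}_k^n}\sum_{q\in\mathbb{Z}^n}\widehat{f}_{kq}(\hbar m)
\;=\;k^n\!\!\sum_{\mu,\nu\in k\mathbb{Z}^n}\!\!\widehat{f}(\mu,\nu),
\]
symmetric in the $T$ and $\check{T}$ directions, whereas your formula keeps only the $\mu=0$ slice. Fortunately this does not break the argument: the extra terms with $\mu\in k\mathbb{Z}^n\setminus\{0\}$ are themselves $\operatorname{O}(\hbar^\infty)$ by the same rapid-decay estimate you already use for $\nu$, so once you write down the correct diagonal the remainder of your proof goes through unchanged. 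The paper organises this as: (i) the $q=0$ piece $\sum_{[m]}\widehat{f}_0(\hbar m)$ is compared to $\operatorname{Tr}(f)$ via the Riemann-sum estimate of Lemma~\ref{Lemma 5.2}(2), and (ii) the $q\neq 0$ tail is bounded directly by Lemma~\ref{Lemma 5.3}.

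Your deduction of the trace-map property from Theorem~\ref{Theorem 1.1} together with $\operatorname{tr}(AB)=\operatorname{tr}(BA)$ is correct and is a genuine alternative to the paper's route: the paper instead establishes $\operatorname{Tr}(f\star g)=\operatorname{Tr}(g\star f)$ earlier in Section~\ref{Section 3} by a direct integration-by-parts on the bidifferential operators $C_i$, and then simply identifies the asymptotic expansion of $\operatorname{tr}(Q_f^\hbar)$ with that predefined $\operatorname{Tr}$. Your argument has the advantage of deriving the trace property from the Toeplitz calculus itself rather than verifying it by hand.
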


The proofs of our main results have two stages. In the first stage, we argue  with the aid of microlocal analysis that by transversality of the two real polarizations $\mathcal{P}^T, \mathcal{P}^{\check{T}}$, we can extend the $L^2$ inner product to a pairing between two suitable classes of distributional sections of $L^{\otimes k}$. We also construct a distributional kernel, which is the real analogue to Bergman kernel for K\"ahler polarizations, so as to obtain a projection $\Pi^\hbar$ onto the quantum Hilbert space $\mathcal{H}^\hbar$. The Toeplitz-type operator is given by $Q_f^\hbar s = \Pi^\hbar (fs)$. In the second stage, noting that all our main results are essentially norm estimations, we define and estimate various norms of operators on $\mathcal{H}^\hbar$, serving as an intermediate steps towards the estimation of the desired norms. These estimations are due to the fact that fibrewise Fourier coefficients of smooth functions on the torus bundle $X \to \check{T}$ has rapid decay uniform on $\check{T}$. For Theorems \ref{Theorem 1.1} and \ref{Theorem 1.2}, in particular, we see that the operators in $\lVert \quad \rVert$ in (\ref{Equation 1.1}) and (\ref{Equation 1.2}) involve remainder terms of Taylor expansion of smooth functions on $X$ which can be expressed in terms of fibrewise Fourier coefficients.\par
The paper is organized as follows. In Section \ref{Section 2}, we give a quick review of Berezin-Toeplitz quantization on compact K\"ahler manifolds. Sections \ref{Section 3}, \ref{Section 4} and \ref{Section 5} are preparations for proofs of our theorems, including deformation quantization with separation of variables, geometric quantization of $(X, \omega)$ by the work \cite{BMN2010}, and fibrewise Fourier transform and Weil-Brezin transform \cite{F1989}. Eventually in Section \ref{Section 6}, we provide proofs of our theorems.

\subsection{Acknowledgement}
\quad\par
We thank Kwok Wai Chan, Qin Li, Si Li and Ziming Nikolas Ma for useful comments as well as helpful discussions. This research was substantially supported by grants from the Research Grants Council of the Hong Kong Special Administrative Region, China (Project No. CUHK14301619 and CUHK14306720) and direct grants from the Chinese University of Hong Kong.

\section{Review of Berezin-Toeplitz Quantization on Compact K\"ahler Manifolds}
\label{Section 2}
This section gives a short review of the Berezin-Toeplitz quantization on compact K\"ahler manifolds. Readers familiar with this can skip this section.\par
\emph{Berezin-Toeplitz (BT) quantization} is a quantization scheme on a compact K\"ahler manifold $(X, \omega, J)$, where $[\omega] = c_1(L) \in H^2(X, \mathbb{Z})$ with $L$ a prequantum line bundle, realizing an asymptotic action of deformation quantization on geometric quantization. The complex structure $J$ induces a complex polarization $\mathcal{P} = T^{1, 0}X$ and it yields the quantum Hilbert space $\mathcal{H}^\hbar = H^0(X, L^{\otimes k})$ for each positive integer $k \in \mathbb{N}$ with $\hbar = \tfrac{1}{k}$. In this case, the \emph{Toeplitz operator} is given by
\begin{equation}
	\label{Equation 2.1}
	Q^\hbar: \mathcal{C}^\infty(X, \mathbb{C}) \times \mathcal{H}^\hbar \to \mathcal{H}^\hbar, \quad (f, s) \mapsto Q_f^\hbar s := \Pi^\hbar (fs),
\end{equation}
where $\Pi^\hbar: \Gamma_{L^2}(X, L^{\otimes k}) \to \mathcal{H}^\hbar$ is the orthogonal projection with respect to the $L^2$-inner product. There exists a unique deformation quantization $\star^{\operatorname{BT}}$, known as the \emph{Berezin-Toepltiz star product}, such that we can find a constant $K_N(f, g)$ independent of $k$ making the following estimates hold:
\begin{equation}
	\left\lVert Q_f^\hbar \circ Q_g^\hbar - Q_{f \star_N^{\operatorname{BT}} g}^\hbar \right\rVert \leq K_N(f, g) \hbar^{N+1},
\end{equation}
where $\hbar = \tfrac{1}{k}$ and $f \star_N^{\operatorname{BT}} g$ is the $N$th order truncation of the power series $f \star^{\operatorname{BT}} g$ in $\hbar$ \cite{BMS1994, KS2000, S2000}. After the discovery of the Toeplitz operator $Q^\hbar$ and the BT star product $\star^{\operatorname{BT}}$, it prompts numerous research in this topic, for instance \cite{CLQ2020a, CLQ2020b, CLQ2020c, M2004, NT2004, T2009}.\par 
In \cite{K1996}, 
Karabegov defined the notion of a deformation quantization \emph{with separation of variables} as a deformation quantization $\star$ on $(X, \omega, J)$ (more generally, on a pseudo-K\"ahler manifold) such that for all open subset $U$ of $X$ and $f, g \in \mathcal{C}^\infty(U, \mathbb{C})$, if $f$ is holomorphic or $g$ is anti-holomorphic, then
\begin{equation}
	f \star g = fg.
\end{equation}
There is a one-to-one correspondence between deformation quantizations with separation of variables on $(M, \omega, J)$ and closed formal $(1, 1)$-forms $\hbar^{-1} \omega + \sum_{i=0}^\infty \hbar^i \omega_i \in \hbar^{-1}\Omega_{\operatorname{cl}}^{1, 1}(X)[[\hbar]]$, and there introduced a \emph{formal Berezin transform} in \cite{K1998} which transforms deformation quantization with separation of variables on $(X, \omega, J)$ to those on the pseudo-K\"ahler manifold $(X, \omega, -J)$, i.e. those with the roles of holomorphic and antiholomorphic variables swapped. The Berezin-Toeplitz star product $\star^{\operatorname{BT}}$ is a deformation quantization with separation of variables on $(X, \omega, -J)$, therefore through the formal Berezin transform we obtain a star product $\check{\star}^{\operatorname{BT}}$ out of $\star^{\operatorname{BT}}$ and an isomorphism of star products
\begin{equation}
	\mathcal{F}^{\operatorname{DQ}}: (\mathcal{C}^\infty(X, \mathbb{C})[[\hbar]], \check{\star}^{\operatorname{BT}}) \to (\mathcal{C}^\infty(X, \mathbb{C})[[\hbar]], \star^{\operatorname{BT}}).
\end{equation}
Karabegov and Schilchenmaier \cite{KS2000} found that $\mathcal{F}^{\operatorname{DQ}}$ can be reconstructed from the Toeplitz operator $Q^\hbar$. They defined the \emph{Berezin transform} $\mathcal{F}^\hbar: \mathcal{C}^\infty(X, \mathbb{C}) \to \mathcal{C}^\infty(X, \mathbb{C})$ by $f \mapsto \sigma^\hbar(Q_f^\hbar)$, where $\sigma^\hbar(A)$ is the \emph{Berezin's covariant symbol} of an operator $A \in \operatorname{End}_\mathbb{C}\mathcal{H}^\hbar$, and Theorem 5.9 in \cite{KS2000} shows that the asymptotic expansion of $\mathcal{F}^\hbar$ gives the formal Berezin transform $\mathcal{F}^{\operatorname{DQ}}$.\par
As a remark, we can extend Karabegov's notion of deformation quantization with separation of variables on an arbitrary symplectic manifolds $(X, \omega)$. For any polarization $\mathcal{P}$ on $(X, \omega)$ and an open subset $U$ of $X$, we define the space of $\mathcal{P}$-invariant functions on $U$:
\begin{align*}
\mathcal{O}_{\mathcal{P}}(U) = \{ f \in \mathcal{C}^\infty(U, \mathbb{C}): \mathcal{L}_\xi f = 0 \text{ for any } \xi \in \Gamma^\infty(U, \mathcal{P}) \}.
\end{align*}

\begin{definition}
	\label{Definition 3.1}
	Let $\mathcal{P}, \mathcal{Q}$ be polarizations of $(X, \omega)$ transversal to each other, i.e. $TX_\mathbb{C} = \mathcal{P} \oplus \mathcal{Q}$. A deformation quantization $\star$ of $(X, \omega)$ is said to be \emph{with separation of variables in} $(\mathcal{P}, \mathcal{Q})$ if for all open subset $U$ of $X$ and $f, g \in \mathcal{C}^\infty(U, \mathbb{C})$, if either $f \in \mathcal{O}_\mathcal{P}(U)$ or $g \in \mathcal{O}_\mathcal{Q}(U)$, then
	\begin{align*}
	f \star g = fg.
	\end{align*}
\end{definition}

\section{Deformation Quantization Compatible with Polarizations on Symplectic Tori}
\label{Section 3}

In this section, we shall define star products on the symplectic torus $X = \mathbb{R}^{2n} / \mathbb{Z}^{2n}$ with periodic coordinates $(x, y)$ with $x, y \in \mathbb{R}^n$ and symplectic form $\omega = \sum_{i=1}^n dy^i \wedge dx^i$ that appear in our main theorems. Define the fibrations $\mu: X \to T$ by $(x, y) + \mathbb{Z}^{2n} \mapsto x + \mathbb{Z}^n$ and $\check{\mu}: X \to \check{T}$ by $(x, y) + \mathbb{Z}^{2n} \mapsto y + \mathbb{Z}^n$, where $T$ and $\check{T}$ are two copies of $\mathbb{R}^n/\mathbb{Z}^n$, yielding transversal real polarizations $\mathcal{P}^T = (\ker d\check{\mu})_\mathbb{C}$ and $\mathcal{P}^{\check{T}} = (\ker d\mu)_\mathbb{C}$.\par
The tangent bundle $TX$ is canonically trivialized. Now we denote by $\Gamma^\infty(X, TX_\mathbb{C}^{\otimes 2})_{\operatorname{const}, \omega}$ the space of $2$-tensor fields $\alpha \in \Gamma^\infty(X, TX_\mathbb{C} \otimes TX_\mathbb{C})$ of constant coefficients, which are regarded as operators $\mathcal{C}^\infty(X, \mathbb{C}) \otimes \mathcal{C}^\infty(X, \mathbb{C}) \to \mathcal{C}^\infty(X, \mathbb{C}) \otimes \mathcal{C}^\infty(X, \mathbb{C})$, satisfying
\begin{equation}
\operatorname{Mult} \circ (\alpha(f, g) - \alpha(g, f)) = \frac{\sqrt{-1}}{2\pi} \{f, g\},
\end{equation}
where $\operatorname{Mult}$ is the usual commutative product on $\mathcal{C}^\infty(X, \mathbb{C})$. Thus, for such a $2$-tensor field $\alpha$, its antisymmetric part is fixed by the Poisson bivector $\omega^{-1}$ which is the inverse of $\omega$ and is given by $\sum_{i=1}^n \partial_{x^i} \wedge \partial_{y^i}$ - we can only vary its symmetric part. We now have a special kind of deformation quantizations on $(X, \omega)$, namely those of the form
\begin{equation}
\label{Equation 3.2}
f \star_\alpha g = \operatorname{Mult} \circ e^{\hbar \alpha} (f \otimes g),
\end{equation}
for some $\alpha \in \Gamma^\infty(X, TX_\mathbb{C}^{\otimes 2})_{\operatorname{const}, \omega}$. In formula (\ref{Equation 3.2}), $\operatorname{Mult}$ is extended by $\mathbb{C}[[\hbar]]$-linearity. This kind of construction of star products appeared, for example, as fibrewise Wick products on the Weyl bundle of an almost K\"ahler manifold in a variant of Fedosov’s quantization \cite{F1996} described by Karabegov and Schlichenmaier in \cite{KS2001}.\par 
Indeed, for all $\alpha, \beta \in \Gamma^\infty(X, TX_\mathbb{C}^{\otimes 2})_{\operatorname{const}, \omega}$, the star products $\star_\alpha, \star_\beta$ are in the same equivalence class. Moreover, we can describe certain isomorphisms between them explicitly. This is related to the difference $\beta - \alpha$, which is symmetric. For any symmetric $2$-tensor field $\gamma \in \Gamma^\infty(X, TX_\mathbb{C} \otimes TX_\mathbb{C})$ of constant coefficients, we denote by $\partial_\gamma$ the second order differential operator on $X$ given by
\begin{align*}
\partial_\gamma = \sum_{i=1}^{2n} \sum_{j=1}^{2n} \gamma^{ij} \frac{\partial^2}{\partial x^i \partial x^j},
\end{align*}
where we write $\gamma = \sum_{i=1}^{2n} \sum_{j=1}^{2n} \gamma^{ij} \partial_{x^i} \otimes \partial_{x^j}$ and denote $y^i$ by $x^{n+i}$ for $i \in \{1, ..., n\}$. The following proposition should be known among experts (for instance one can refer to Lemma 7 in \cite{C2016}), but we shall state it so as to make our discussion self-contained. 

\begin{proposition}
	\label{Proposition 3.2}
	Let $\alpha, \beta \in \Gamma^\infty(X, TX_\mathbb{C}^{\otimes 2})_{\operatorname{const}, \omega}$. Then
	\begin{align*}
	e^{\frac{1}{2} \hbar \partial_{\beta - \alpha}}: (\mathcal{C}^\infty(X, \mathbb{C})[[\hbar]], \star_\alpha) \to (\mathcal{C}^\infty(X, \mathbb{C})[[\hbar]], \star_\beta)
	\end{align*}
	is an isomorphism of star products.
\end{proposition}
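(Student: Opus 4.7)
The plan is to set $\gamma := \beta - \alpha$ and observe that $\gamma$ is a symmetric constant-coefficient $2$-tensor field, since by the discussion following (3.1) both $\alpha$ and $\beta$ have the same antisymmetric part, fixed by the Poisson bivector $\omega^{-1}$. Hence $\partial_\gamma$ is well-defined, and the operator $T := e^{\frac{1}{2}\hbar\partial_\gamma}$ is a $\mathbb{C}[[\hbar]]$-linear automorphism of $\mathcal{C}^\infty(X,\mathbb{C})[[\hbar]]$ with inverse $e^{-\frac{1}{2}\hbar\partial_\gamma}$. The task therefore reduces to verifying the intertwining property $T(f \star_\alpha g) = (Tf) \star_\beta (Tg)$ for all $f, g \in \mathcal{C}^\infty(X,\mathbb{C})$.

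The heart of the argument is a deformed Leibniz rule. Since $\gamma$ has constant coefficients and is symmetric, one checks by direct expansion that
\begin{equation*}
\partial_\gamma \circ \operatorname{Mult} \;=\; \operatorname{Mult} \circ \bigl(\partial_\gamma \otimes I \,+\, I \otimes \partial_\gamma \,+\, 2\gamma\bigr),
\end{equation*}
where on the right $\gamma$ is reinterpreted as the constant-coefficient bidifferential operator $\sum_{i,j} \gamma^{ij}\,\partial_{x^i} \otimes \partial_{x^j}$ acting on $\mathcal{C}^\infty(X,\mathbb{C}) \otimes \mathcal{C}^\infty(X,\mathbb{C})$; the factor $2$ is due to the symmetry of $\gamma$. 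The three summands on the right are constant-coefficient differential operators on $X \times X$ and therefore commute pairwise, so exponentiating with parameter $\tfrac{1}{2}\hbar$ produces the key identity
\begin{equation*}
T \circ \operatorname{Mult} \;=\; \operatorname{Mult} \circ (T \otimes T) \circ e^{\hbar \gamma}.
\end{equation*}

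Applying this identity and using that the constant-coefficient bidifferential operators $\hbar\alpha$ and $\hbar\gamma$ commute,
\begin{equation*}
T(f \star_\alpha g) \;=\; T \circ \operatorname{Mult} \circ e^{\hbar\alpha}(f \otimes g) \;=\; \operatorname{Mult} \circ (T \otimes T) \circ e^{\hbar\gamma} \circ e^{\hbar\alpha}(f \otimes g) \;=\; \operatorname{Mult} \circ (T \otimes T) \circ e^{\hbar\beta}(f \otimes g),
\end{equation*}
since $\alpha + \gamma = \beta$. Finally, $T \otimes T$ and $e^{\hbar\beta}$ are both constant-coefficient on $X \times X$ and therefore commute, so this expression equals $\operatorname{Mult} \circ e^{\hbar\beta} \circ (T \otimes T)(f \otimes g) = (Tf) \star_\beta (Tg)$, as required.

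The only step requiring genuine attention is verifying the factor of $2$ in the deformed Leibniz rule, which arises precisely from the symmetry of $\gamma$ and, in tandem with the choice of the coefficient $\tfrac{1}{2}$ in the exponent of $T$, produces the exponent $\alpha + \gamma = \beta$ on the nose. Everything else is formal algebra of commuting constant-coefficient operators on $X \times X$, so there is no analytic obstacle beyond manipulating power series in $\hbar$; in particular one can also verify the identity on a Fourier basis $e^{2\pi\sqrt{-1}\langle \xi, \cdot\rangle}$ as an independent sanity check.
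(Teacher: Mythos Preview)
Your proof is correct. The paper does not actually supply its own proof of Proposition~\ref{Proposition 3.2}; it merely states the result and refers the reader to Lemma~7 in \cite{C2016}. Your argument via the deformed Leibniz rule $\partial_\gamma \circ \operatorname{Mult} = \operatorname{Mult} \circ (\partial_\gamma \otimes I + I \otimes \partial_\gamma + 2\gamma)$ and the subsequent exponentiation of commuting constant-coefficient operators is the standard and cleanest way to see this, and it fills in precisely what the paper leaves to the literature.
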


There is a canonical choice $\tilde{\omega}^{-1} = \tfrac{\sqrt{-1}}{4\pi} \sum_{i=1}^n \left( \partial_{x^i} \otimes \partial_{y^i} - \partial_{y^i} \otimes \partial_{x^i} \right)$ in $\Gamma^\infty(X, TX_\mathbb{C}^{\otimes 2})_{\operatorname{const}, \omega}$. The corresponding deformation quantization $\star_{\tilde{\omega}^{-1}}$ is known as the \emph{Moyal product}. Unfortunately, for any pair of transversal translation-invariant polarizations $(\mathcal{P}, \mathcal{Q})$, the Moyal product is undesirable in the sense that it is not with separation of variables in $(\mathcal{P}, \mathcal{Q})$. Instead, the pair $(\mathcal{P}, \mathcal{Q})$ determines a $2$-tensor field $\alpha \in \Gamma^\infty(X, TX_\mathbb{C}^{\otimes 2})_{\operatorname{const}, \omega}$ lying in $\Gamma^\infty(X, \mathcal{P} \otimes \mathcal{Q})$. The decomposition $TX_\mathbb{C} = \mathcal{P} \oplus \mathcal{Q}$ induces a projection $TX_\mathbb{C} \otimes TX_\mathbb{C} \to \mathcal{P} \otimes \mathcal{Q}$, and $\alpha$ is indeed obtained by the projection of $\tilde{\omega}^{-1}$. Clearly, $\star_\alpha$ is with separation of variables in $(\mathcal{P}, \mathcal{Q})$. If we swap the order and consider the pair $(\mathcal{Q}, \mathcal{P})$, we have the respective $2$-tensor field $\tilde{\alpha} \in \Gamma^\infty(X, TX_\mathbb{C}^{\otimes 2})_{\operatorname{const}, \omega}$. In this case, we call $\star_\alpha, \star_{\tilde{\alpha}}$ \emph{opposite star products} of each other.\par
By the above explanation, as we shall consider polarizations $\mathcal{P}^T, \mathcal{P}^{\check{T}}$, it is natural to define a star product $\star$ on $\mathcal{C}^\infty(X, \mathbb{C})[[\hbar]]$ as follows: for all $f, g \in \mathcal{C}^\infty(X, \mathbb{C})$,
\begin{equation}
\label{Equation 3.3}
f \star g = \sum_{k=0}^\infty \hbar^k C_k(f, g),
\end{equation}
where for any $k \in \mathbb{N} \cup \{0\}$,
\begin{equation}
\label{Equation 3.4}
C_k(f, g) = \frac{1}{k! \cdot (2\pi \sqrt{-1})^k} \sum_{i_1=1}^n \cdots \sum_{i_k=1}^n \frac{\partial^k f}{\partial y^{i_1} \cdots \partial y^{i_k}} \frac{\partial^k g}{\partial x^{i_1} \cdots \partial x^{i_k}} = \frac{1}{(2\pi \sqrt{-1})^k} \sum_{\lvert I \rvert = k} \frac{1}{I!} \frac{\partial^k f}{\partial y^I} \frac{\partial^k g}{\partial x^I}.
\end{equation}
And we define another star product $\check{\star}$ as follows: for all $f, g \in \mathcal{C}^\infty(X, \mathbb{C})$,
\begin{equation}
\label{Equation 3.5}
f \check{\star} g = \sum_{k=0}^\infty \hbar^k \check{C}_k(f, g),
\end{equation}
where for any $k \in \mathbb{N} \cup \{0\}$,
\begin{equation}
\check{C}_k(f, g) = \frac{(\sqrt{-1})^k}{(2\pi)^k k!} \sum_{i_1=1}^n \cdots \sum_{i_k=1}^n \frac{\partial^k f}{\partial x^{i_1} \cdots \partial x^{i_k}} \frac{\partial^k g}{\partial y^{i_1} \cdots \partial y^{i_k}} = \frac{(\sqrt{-1})^k}{(2\pi)^k} \sum_{\lvert I \rvert = k} \frac{1}{I!} \frac{\partial^k f}{\partial x^I} \frac{\partial^k g}{\partial y^I}.
\end{equation}
The following proposition is a direct application of Proposition \ref{Proposition 3.2}.

\begin{proposition}
	\label{Proposition 3.3}
	The star product $\star$ defined as in (\ref{Equation 3.3}) (resp. $\check{\star}$ defined as in (\ref{Equation 3.5})) is a deformation quantization on $(X, \omega)$ with separation of variables in the pair of transversal polarizations $(\mathcal{P}^{\check{T}}, \mathcal{P}^T)$ (resp. $(\mathcal{P}^T, \mathcal{P}^{\check{T}})$). Also, $\star$ and $\check{\star}$ are opposite star products and
	\begin{align*}
	e^{\hbar \Delta}: (\mathcal{C}^\infty(X)[[\hbar]], \star) \to (\mathcal{C}^\infty(X)[[\hbar]], \check{\star})
	\end{align*}
	is an isomorphism of star products, where $\Delta = \tfrac{\sqrt{-1}}{2\pi} \sum_{i=1}^n \tfrac{\partial^2}{\partial x^i \partial y^i}$. 
\end{proposition}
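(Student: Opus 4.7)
The plan is to recognize both $\star$ and $\check\star$ as instances of the fibrewise Wick-type star products $\star_\alpha$ from (\ref{Equation 3.2}) and then invoke Proposition \ref{Proposition 3.2}. First I would rewrite the series (\ref{Equation 3.4}) in closed form as $f \star g = \operatorname{Mult} \circ e^{\hbar \alpha}(f \otimes g)$ with the constant-coefficient $2$-tensor
\[
\alpha \;=\; \frac{1}{2\pi\sqrt{-1}} \sum_{i=1}^n \partial_{y^i} \otimes \partial_{x^i} \;\in\; \Gamma^\infty(X, \mathcal{P}^{\check{T}} \otimes \mathcal{P}^T),
\]
and similarly $\check\star = \star_{\check\alpha}$ with $\check\alpha = \tfrac{\sqrt{-1}}{2\pi} \sum_i \partial_{x^i} \otimes \partial_{y^i} \in \Gamma^\infty(X, \mathcal{P}^T \otimes \mathcal{P}^{\check{T}})$. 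I would then check that $\alpha$ and $\check\alpha$ both belong to $\Gamma^\infty(X, TX_\mathbb{C}^{\otimes 2})_{\operatorname{const},\omega}$ by verifying $\operatorname{Mult} \circ (\alpha(f,g) - \alpha(g,f)) = \tfrac{\sqrt{-1}}{2\pi}\{f,g\}$ directly from the Poisson bracket $\{f,g\} = \sum_i (\partial_{x^i}f \cdot \partial_{y^i}g - \partial_{y^i}f \cdot \partial_{x^i}g)$ of $\omega = \sum_i dy^i \wedge dx^i$.

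Separation of variables for $\star$ in $(\mathcal{P}^{\check{T}}, \mathcal{P}^T)$ is then immediate from the location of $\alpha$: each term of $C_k(f,g)$ for $k \geq 1$ carries a derivative $\partial_{y^I}$ acting on $f$ and a derivative $\partial_{x^I}$ acting on $g$, so $C_k(f,g)$ vanishes whenever $f \in \mathcal{O}_{\mathcal{P}^{\check{T}}}(U)$ (i.e., $\partial_{y^i} f = 0$ for all $i$) or $g \in \mathcal{O}_{\mathcal{P}^T}(U)$ (i.e., $\partial_{x^i} g = 0$ for all $i$), yielding $f \star g = fg$ in either case. The argument for $\check\star$ is identical with the two polarizations swapped, and the very same swap is precisely what makes $\star$ and $\check\star$ opposite in the sense of the discussion preceding the proposition: $\check\alpha$ is obtained from $\tilde{\omega}^{-1}$ by projecting onto $\mathcal{P}^T \otimes \mathcal{P}^{\check{T}}$ instead of $\mathcal{P}^{\check{T}} \otimes \mathcal{P}^T$.

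For the explicit equivalence, I would compute
\[
\check\alpha - \alpha \;=\; \frac{\sqrt{-1}}{2\pi}\sum_{i=1}^n\bigl(\partial_{x^i}\otimes\partial_{y^i} + \partial_{y^i}\otimes\partial_{x^i}\bigr),
\]
which is symmetric, so the associated second-order operator from the text is $\partial_{\check\alpha - \alpha} = \tfrac{\sqrt{-1}}{\pi}\sum_i \partial_{x^i}\partial_{y^i} = 2\Delta$. Proposition \ref{Proposition 3.2} then delivers at once that $e^{\frac{1}{2}\hbar \partial_{\check\alpha - \alpha}} = e^{\hbar\Delta}$ is an isomorphism $(\mathcal{C}^\infty(X)[[\hbar]], \star) \to (\mathcal{C}^\infty(X)[[\hbar]], \check\star)$. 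The main obstacle is purely bookkeeping: matching the factors of $2\pi\sqrt{-1}$ and $k!$ in (\ref{Equation 3.4}) against the expansion of $\operatorname{Mult} \circ e^{\hbar\alpha}$, and consistently using the conventions $\mathcal{P}^T \ni \partial_{x^i}$, $\mathcal{P}^{\check{T}} \ni \partial_{y^i}$; there is no substantive analytic content beyond Proposition \ref{Proposition 3.2}.
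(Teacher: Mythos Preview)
Your proposal is correct and is exactly what the paper does: it declares Proposition~\ref{Proposition 3.3} ``a direct application of Proposition~\ref{Proposition 3.2}'' without writing out details, and your identification of $\star=\star_\alpha$, $\check\star=\star_{\check\alpha}$ together with the computation $\partial_{\check\alpha-\alpha}=2\Delta$ is precisely that application. The bookkeeping you flag (signs of $2\pi\sqrt{-1}$, the conventions $\mathcal{P}^T=\operatorname{span}\partial_{x^i}$ and $\mathcal{P}^{\check T}=\operatorname{span}\partial_{y^i}$) checks out against (\ref{Equation 3.4}) and the stated $\Delta$.
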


We end this section by discussing trace maps for deformation quantizations.

\begin{definition}
	\label{Definition 3.4}
	A \emph{trace map} for a deformation quantization $\star$ of $(X, \omega)$ is a $\mathbb{C}[[\hbar]]$-linear continuous functional $\operatorname{Tr}: \mathcal{C}^\infty(X, \mathbb{C})[[\hbar]] \to \hbar^{-n} \mathbb{C}[[\hbar]]$ such that
	\begin{enumerate}
		\item $\operatorname{Tr}(f \star g) = \operatorname{Tr}(g \star f)$ for all $f, g \in \mathcal{C}^\infty(X, \mathbb{C})$; and
		\item for all $f \in \mathcal{C}^\infty(X, \mathbb{C})$, $\operatorname{Tr}(f) = \int_X f e^{\omega/\hbar} + \operatorname{O}(\hbar^{1-n})$.
	\end{enumerate}
\end{definition}

Define
\begin{align}
\label{Equation 3.7}
\operatorname{Tr}: \mathcal{C}^\infty(X, \mathbb{C})[[\hbar]] \to \hbar^{-n} \mathbb{C}[[\hbar]], \quad f \mapsto \operatorname{Tr}(f) := \int_X f e^{\omega/\hbar}.
\end{align}


\begin{proposition}
	The map $\operatorname{Tr}$ defined as in (\ref{Equation 3.7}) is a trace map for both the star products $\star, \check{\star}$ defined as in (\ref{Equation 3.3}) and (\ref{Equation 3.5}) respectively.
\end{proposition}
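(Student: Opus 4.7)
The normalization condition (2) of Definition \ref{Definition 3.4} is essentially free here: since $\dim_\mathbb{R} X = 2n$, the formal series $e^{\omega/\hbar} = \sum_{j=0}^n \omega^j/(j! \hbar^j)$ truncates, and only its top-degree piece contributes under $\int_X$. Thus $\operatorname{Tr}(f) = \hbar^{-n}(n!)^{-1} \int_X f\, \omega^n$ lies in $\hbar^{-n}\mathbb{C}[[\hbar]]$ and the asymptotic normalization in fact holds exactly, with zero error term. The substantive task is to verify the cyclic identity $\operatorname{Tr}(f \star g) = \operatorname{Tr}(g \star f)$ and its analogue for $\check{\star}$.

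For $\star$, substituting (\ref{Equation 3.4}) reduces the cyclic identity, order by order in $\hbar$ and summand by summand in the multi-index $I$ of length $k$, to
\[
\int_X \frac{\partial^k f}{\partial y^I}\frac{\partial^k g}{\partial x^I}\, \omega^n = \int_X \frac{\partial^k g}{\partial y^I}\frac{\partial^k f}{\partial x^I}\, \omega^n.
\]
Integrating by parts $k$ times in the $y^I$ variables on the left, and $k$ times in the $x^I$ variables on the right, both sides reduce to the common expression $(-1)^k \int_X f \cdot (\partial^{2k} g / \partial x^I \partial y^I)\, \omega^n$. Since $X = T \times \check{T}$ is compact without boundary and $\partial_{x^i}, \partial_{y^j}$ are globally defined vector fields preserving the constant-coefficient volume form $\omega^n$, no boundary terms arise.

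For $\check{\star}$, rather than redoing the computation I would invoke the star-product isomorphism $e^{\hbar \Delta}: (\mathcal{C}^\infty(X, \mathbb{C})[[\hbar]], \star) \to (\mathcal{C}^\infty(X, \mathbb{C})[[\hbar]], \check{\star})$ supplied by Proposition \ref{Proposition 3.3}. The key auxiliary observation is the invariance $\operatorname{Tr} \circ e^{\hbar\Delta} = \operatorname{Tr}$, which upon expansion $e^{\hbar\Delta} = \sum_{j \geq 0}(\hbar^j/j!) \Delta^j$ reduces to $\int_X \Delta^j h\, \omega^n = 0$ for each $j \geq 1$ and each $h \in \mathcal{C}^\infty(X, \mathbb{C})$; this is another application of integration by parts on the closed torus to the mixed partials $\partial^{2j} h/(\partial x^I \partial y^I)$ comprising $\Delta^j h$. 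Writing arbitrary $F, G$ as $F = e^{\hbar\Delta} f$ and $G = e^{\hbar\Delta} g$ (using invertibility of $e^{\hbar\Delta}$), the intertwining $F \check{\star} G = e^{\hbar\Delta}(f \star g)$ combined with this invariance transports cyclicity of $\operatorname{Tr}$ from $\star$ to $\check{\star}$. The only step requiring genuine care is the bookkeeping of the integration-by-parts manipulation in the preceding paragraph; once this is handled, the rest is formal, since all $\hbar$-expansions in sight are either termwise finite or controlled.
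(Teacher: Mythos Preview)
Your proof is correct. For the star product $\star$ your argument is essentially identical to the paper's: both reduce the cyclic identity to an integration-by-parts (Stokes' theorem) computation on the closed torus, using that $\partial_{x^i}, \partial_{y^i}$ preserve the Liouville form.

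Where you diverge is in the treatment of $\check{\star}$. The paper simply observes that the very same integration-by-parts identity
\[
\int_X \frac{\partial^{|I|} f}{\partial x^I} \frac{\partial^{|I|} g}{\partial y^I}\, e^{\omega/\hbar} = \int_X \frac{\partial^{|I|} g}{\partial x^I} \frac{\partial^{|I|} f}{\partial y^I}\, e^{\omega/\hbar}
\]
handles both $\star$ and $\check{\star}$ at once, since the bidifferential operators $C_k$ and $\check{C}_k$ differ only by swapping the roles of $x$ and $y$ (and a sign). You instead transport cyclicity along the isomorphism $e^{\hbar\Delta}$ of Proposition~\ref{Proposition 3.3}, after checking the auxiliary invariance $\operatorname{Tr}\circ e^{\hbar\Delta}=\operatorname{Tr}$. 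This is sound, and highlights that the trace is compatible with the formal Berezin-type transform; but it is a detour compared to the paper's one-line observation that the same identity covers both products.
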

\begin{proof}
	Let $f, g \in \mathcal{C}^\infty(X, \mathbb{C})[[\hbar]]$. Observe that for all $i \in \{1, ..., n\}$, $\mathcal{L}_{\partial_{x^i}} \omega = \mathcal{L}_{\partial_{y^i}} \omega = 0$. Thus, for all multi-index $I$, by Stoke's Theorem,
	\begin{align*}
	\int_X \frac{\partial^{\lvert I \rvert} f}{\partial x^I} \frac{\partial^{\lvert I \rvert} g}{\partial y^I} e^{\omega/\hbar} = \int_X \frac{\partial^{\lvert I \rvert} g}{\partial x^I} \frac{\partial^{\lvert I \rvert} f}{\partial y^I} e^{\omega/ \hbar}.
	\end{align*}
	It directly follows from the above observation that $\operatorname{Tr}(f \star g) = \operatorname{Tr}(g \star f)$ and $\operatorname{Tr}(f \check{\star} g) = \operatorname{Tr}(g \check{\star} f)$. Also, $\operatorname{Tr}$ clearly saticfies the second condition in Definition \ref{Definition 3.4}.
\end{proof}

\section{Geometric Quantization on Symplectic Tori}
\label{Section 4}
In this section, we shall perform geometric quantization on the symplectic torus $X = \mathbb{R}^{2n}/\mathbb{Z}^{2n}$ in translation-invariant non-negative polarizations and introduce the BKS pairing between different quantum Hilbert spaces, which is shown to be important for the construction of Toeplitz-type operators for real polarizations in Section \ref{Section 6}.

\subsection{Prequantum line bundle}
\quad\par
We consider a prequantum line bundle $L$ equipped with a Hermitian connection $\nabla$ with curvature $-2\pi\sqrt{-1}\omega$. Explicitly, the line bundle $L$ is constructed as follows. The trivial line bundle $\mathbb{R}^{2n} \times \mathbb{C}$ on $\mathbb{R}^{2n}$ has a canonical Hermitian metric given by $\langle (u, \xi), (u, \xi') \rangle = (u, \xi \overline{\xi'})$ for all $u \in \mathbb{R}^{2n}$ and $\xi, \xi' \in \mathbb{C}$. This trivial line bundle has a Hermitian connection
\begin{equation}
\nabla = d - \pi \sqrt{-1} \sum_{i=1}^n (y^i dx^i - x^i dy^i).
\end{equation}
The group $\mathbb{Z}^{2n}$ acts on $\mathbb{R}^{2n} \times \mathbb{C}$ by
\begin{equation}
\lambda \cdot (u, \xi) = (u + \lambda, \alpha(\lambda) e^{-\pi \sqrt{-1} \omega(u, \lambda)} \xi),
\end{equation}
for all $u \in \mathbb{R}^{2n}$ and $\lambda \in \mathbb{Z}^{2n}$, where $\alpha: \mathbb{Z}^{2n} \to \{-1, 1\}$ is the so-called ``canonical'' semi-character
\begin{align*}
\alpha(\lambda) = (-1)^{\sum_{i=1}^n \lambda_i \lambda_{n+i}}
\end{align*}
and $\omega$ is identified with symplectic bilinear form
\begin{align*}
\omega(u, v) = \sum_{i=1}^n (u_{n+i} v_i - u_iv_{n+i}).
\end{align*}
We define $L$ to be the quotient $\mathbb{R}^{2n} \times_{\mathbb{Z}^{2n}} \mathbb{C}$, and both the Hermitian metric and the Hermitian connection on $\mathbb{R}^{2n} \times \mathbb{C}$ descend to $L$.\par
Because we pass from deformation quantization to geometric quantization, instead of regarding $\hbar$ as a formal variable, we set $\hbar = \tfrac{1}{k}$ for $k \in \mathbb{N}$ \cite{D2001}. Then $L^{\otimes k}$ is equipped with the Hermitian connection $\nabla^\hbar = d - k \pi\sqrt{-1} \sum_{i=1}^n (y^i dx^i - x^i dy^i)$ with curvature $-2k\pi\sqrt{-1} \omega$. Here and below, the space $\Gamma^\infty(X, L^{\otimes k})$ of global smooth sections of $L^{\otimes k}$ will be identified with the space $\mathcal{C}_{\operatorname{qper}, k}^\infty(\mathbb{R}^{2n}, \mathbb{C})$ of smooth functions $s \in \mathcal{C}^\infty(\mathbb{R}^{2n}, \mathbb{C})$ that satisfy the appropriate quasi-periodicity conditions:
\begin{equation}
\label{Equation 4.3}
s(u + \lambda) = [\alpha(\lambda)]^k e^{-k\pi \sqrt{-1} \omega(u, \lambda)} s(u),
\end{equation}
for all $u \in \mathbb{R}^{2n}$ and $\lambda \in \mathbb{Z}^{2n}$. We have an inner product $\langle \quad, \quad \rangle_{L^2}$ on $L^{\otimes k}$ induced by the Hermitian metric on $L^{\otimes k}$ and Liouville measure on $X$.

\subsection{The polarized sections}
\quad\par
Translation-invariant non-negative polarizations on $(X, \omega)$ are parametrized by the closed Siegel disc $\mathbb{D}_n$, i.e. the space of symmetric $n \times n$ matrices $A$ over $\mathbb{C}$ such that $I_n - A\overline{A}$ is positive semidefinite. For $\tau \in \mathbb{D}_n$,
we have the respective translation-invariant polarization given by: for all $p \in X$,
\begin{align*}
\mathcal{P}_p^\tau = \operatorname{span}_\mathbb{C} \left\{ \sum_{i=1}^n \left( -\sqrt{-1} (I_n - \overline{\tau})_{ij} \left. \frac{\partial}{\partial x^i} \right\vert_p + (I_n + \overline{\tau})_{ij} \left. \frac{\partial}{\partial y^i} \right\vert_p \right): j \in \{1, ..., n\} \right\},
\end{align*}
where $I_n$ is the $n \times n$ identity matrix. Let $\mathring{\mathbb{D}}_n$ be the interior of $\mathbb{D}_n$ and $\mathbb{H}_n$ be the Siegel upper-half space, i.e. the space of symmetric $n \times n$ matrices $A$ over $\mathbb{C}$ whose imaginary parts $\operatorname{Im} A$ are positive definite. We have the Cayley transform
\begin{equation}
\mathbb{H}_n \to \mathring{\mathbb{D}}_n, \quad \Omega \mapsto \tau(\Omega) = (\sqrt{-1}I_n - \Omega) (\sqrt{-1}I_n + \Omega)^{-1}
\end{equation}
with inverse
\begin{equation}
\mathring{\mathbb{D}}_n \to \mathbb{H}_n, \quad \tau \mapsto \Omega(\tau) = \sqrt{-1} (I_n - \tau) (I_n + \tau)^{-1}.
\end{equation}
For $\tau \in \mathring{\mathbb{D}}_n$, $\mathcal{P}^\tau$ is a positive polarization with respect to a complex coordinate $z_\tau := x - \Omega(\tau) y$, giving a complex manifold $X_\tau$. The two polarizations $\mathcal{P}^T, \mathcal{P}^{\check{T}}$ of our concern correspond to two points at the boundary of $\mathbb{D}_n$, namely $\mathcal{P}^T = \mathcal{P}^{-I_n}$ and $\mathcal{P}^{\check{T}} = \mathcal{P}^{I_n}$.\par 
As quantum Hilbert spaces in real polarizations involve distributional sections of $L^{\otimes k}$ in general, we need to deal with the Gelfand triple
\begin{align*}
	\Gamma^\infty(X, L^{\otimes k}) \hookrightarrow \Gamma_{L^2}(X, L^{\otimes k}) \cong \Gamma_{L^2}(X, L^{\otimes k})' \hookrightarrow \Gamma^{-\infty}(X, L^{\otimes k}) \cong (\Gamma^\infty(X, \overline{L}^{\otimes k}))'.
\end{align*}
For a distributional section $s \in \Gamma^{-\infty}(X, L^{\otimes k})$ and a smooth section $\tau \in \Gamma^\infty(X, L^{\otimes k})$, we define $\langle s, \tau \rangle = s(\overline{\tau})$ and $\langle \tau, s \rangle = \overline{\langle s, \tau \rangle}$. We consider the embedding induced by the Liouville measure
\begin{align*}
	\iota: \Gamma^\infty(X, L^{\otimes k}) \hookrightarrow \Gamma^{-\infty}(X, L^{\otimes k})
\end{align*}
given by $\langle \iota(s), \tau \rangle = \langle s, \tau \rangle_{L^2}$ for any $\tau \in \Gamma^\infty(X, L^{\otimes k})$. Preserving the embedding $\iota$, we can extends actions of differential operators on smooth sections of $L^{\otimes k}$ to distributional sections via `integration by parts'. In particular, for arbitrary $s \in \Gamma^{-\infty}(X, L^{\otimes k})$ and test section $\tau \in \Gamma^\infty(X, L^{\otimes k})$, we have (i) $\langle fs, \tau \rangle := \left\langle s, \overline{f} \tau \right\rangle$ for all $f \in \mathcal{C}^\infty(X, \mathbb{C})$; (ii) $\left\langle \nabla_\xi^\hbar s, \tau \right\rangle = -\left\langle (\operatorname{div}_\omega \xi) s, \tau \right\rangle - \left\langle s, \nabla_\xi^\hbar \tau \right\rangle$ for all $\xi \in \mathcal{X}(X)_\mathbb{C}$, where $\operatorname{div}_\omega \xi = \frac{\mathcal{L}_\xi \omega^n}{\omega^n}$ is the divergence of $\xi$ with respect to $\tfrac{1}{n!} k^n \omega^n$.\par
For a polarization $\mathcal{P}$ on $(X, \omega)$, by a \emph{$\mathcal{P}$-polarized (distributional) section} of $L^{\otimes k}$, we mean a distributional section $s \in \Gamma^{-\infty}(X, L^{\otimes k})$ such that $\nabla_{\overline{\xi}}^\hbar s = 0$ for any $\xi \in \Gamma^\infty(X, \mathcal{P})$. 

\subsection{The half-form correction}
\quad\par
We first introduce an indefinite pairing on the space of $n$-forms on $X$ given by
\begin{equation}
\langle \eta, \eta' \rangle = \frac{n! \cdot \sqrt{-1}^n (-1)^{\frac{n(n-1)}{2}} \eta \wedge \overline{\eta'}}{(2k\omega)^n}.
\end{equation}
For $\tau \in \mathbb{D}_n$, define the complex line bundle $K^\tau$ on $X$ as follows: for all $p \in X$,
\begin{equation}
	K_p^\tau = \left\{ \alpha \in \textstyle\bigwedge^n T_p^*X_\mathbb{C}: \iota_{\overline{\xi}} \alpha = 0 \text{ for any } \xi \in \mathcal{P}_p^\tau \right\}.
\end{equation}
This canonical line bundle $K^\tau$ is generated by $d^n(x, y)_\tau := \bigwedge^n ((I_n + \tau) dx - \sqrt{-1} (I_n - \tau) dy)$. When $\tau \in \mathring{\mathbb{D}}_n$, $K^\tau$ is just the canonical line bundle of the complex manifold $X_\tau$. We see that
\begin{align*}
	\left\langle d^n(x, y)_\tau, d^n(x, y)_{\tau'} \right\rangle = \det(I_n + \tau) \det(I_n + \overline{\tau'}) \det \frac{\Omega(\tau) - \overline{\Omega(\tau')}}{2k\sqrt{-1}}, \quad \tau, \tau' \in \mathring{\mathbb{D}_n}.
\end{align*}
By direct calculation,
\begin{equation}
	\left\langle d^n(x, y)_{-I_n}, d^n(x, y)_{I_n} \right\rangle = 2^n k^{-n}.
\end{equation}
The half-form correction consists of the choices of the following data: (i) a square root $\sqrt{K^\tau}$ of $K^\tau$, i.e. $\sqrt{K^\tau} \otimes \sqrt{K^\tau} \cong K^\tau$, for all $\tau \in \mathbb{D}_n$; (ii) a pairing $\langle \quad, \quad \rangle: \sqrt{K^\tau} \otimes \sqrt{K^{\tau'}} \to X \times \mathbb{C}$ for all $\tau, \tau' \in \mathbb{D}_n$, such that the following diagram commutes:
\begin{center}
	\begin{tikzcd}
		(\sqrt{K^\tau} \otimes \sqrt{K^{\tau'}})^{\otimes 2} \ar[rrr, "\langle \quad{,} \quad \rangle \otimes \langle \quad{,} \quad \rangle"] \ar[d, "\cong"'] &&& (X \times \mathbb{C})^{\otimes 2} \ar[d, "\cong"]\\
		K^\tau \otimes K^{\tau'} \ar[rrr, "\langle \quad{,} \quad \rangle"] &&& X \times \mathbb{C}
	\end{tikzcd}
\end{center}
The essential ingredient for these data is the choice of the square root of $\left\langle d^n(x, y)_\tau, d^n(x, y)_{\tau'} \right\rangle$ for $\tau, \tau' \in \mathbb{D}_n$. We need to first fix some choices of branches of square roots.
\begin{itemize}
	\item The branch of the square root $\sqrt{\det(I_n + \tau)}$, $\tau \in \mathbb{D}_n$, is chosen to be such that it is $1$ when $\tau = 0$, and choose the branch of the square root $\sqrt{\det(I_n + \overline{\tau})} := \overline{\sqrt{\det(I_n + \tau)}}$.
	\item The branch of the square root of the determinant of the form $\left( \det A \right)^{-1}$, where $A$ is an invertible symmetric $n \times n$ matrix over $\mathbb{C}$, is naturally chosen to be
	\begin{equation*}
	\left( \det A \right)^{-1/2} := (2\pi)^{n/2} \int_{\mathbb{R}^n} e^{- \frac{\xi \cdot A \xi}{2}} d^n\xi.
	\end{equation*}
\end{itemize}
With these choices, we take the branch of the square root of $\left\langle d^n(x, y)_\tau, d^n(x, y)_{\tau'} \right\rangle$ for $\tau, \tau' \in \mathbb{D}_n$ so that for all $\tau, \tau' \in \mathring{\mathbb{D}_n}$,
\begin{equation}
	\sqrt{\left\langle d^n(x, y)_\tau, d^n(x, y)_{\tau'} \right\rangle} = \sqrt{\det (I_n + \tau)} \sqrt{\det (I_n + \overline{\tau'})} \left( \det \frac{\Omega(\tau) - \overline{\Omega(\tau')}}{2k\sqrt{-1}} \right)^{1/2}.
\end{equation}
We make the choice such that there is a generator $\sqrt{d^n(x, y)_\tau}$ of $\sqrt{K^\tau}$ for each $\tau \in \mathbb{D}_n$, and
\begin{align*}
\left\langle \sqrt{d^n(x, y)_\tau}, \sqrt{d^n(x, y)_{\tau'}} \right\rangle = \sqrt{\left\langle d^n(x, y)_\tau, d^n(x, y)_{\tau'} \right\rangle}, \quad \text{for any } \tau, \tau' \in \mathbb{D}_n,
\end{align*}
implying that the vector bundle map $\sqrt{K^\tau} \otimes \sqrt{K^\tau} \to K^\tau, \sqrt{d^n(x, y)_\tau} \otimes \sqrt{d^n(x, y)_\tau} \mapsto d^n(x, y)_\tau$ is an isomorphism preserving the Hermitian metrics. In particular,
\begin{equation}
\label{Equation 4.10}
\left\langle \sqrt{d^n(x, y)_{-I_n}}, \sqrt{d^n(x, y)_{I_n}} \right\rangle = 2^{\frac{n}{2}} k^{-\frac{n}{2}}.
\end{equation}
By \cite{BMN2010}, $\sqrt{d^n(x, y)_\tau}$ is parallel with respect to a chosen compatible $\overline{\mathcal{P}^\tau}$-connection on $\sqrt{K^\tau}$.

\subsection{The quantum Hilbert space}
\quad\par

\begin{definition}
	For $\tau \in \mathbb{D}_n$, the Hilbert space of quantum states is
	\begin{align*}
	\mathcal{H}_\tau^\hbar := \{ s \in \Gamma^{-\infty}(X, L^{\otimes k}): \nabla_{\overline{\xi}}^\hbar s = 0 \text{ for any } \xi \in \Gamma^\infty(X, \mathcal{P}^\tau) \} \otimes \mathbb{C} \sqrt{d^n(x, y)_\tau}.
	\end{align*}
\end{definition}

Given the above definition, we owe the readers the inner product on $\mathcal{H}_\tau^\hbar$. Indeed, we can define a more general pairing between different Hilbert spaces of quantum states $\mathcal{H}_\tau^\hbar, \mathcal{H}_{\tau'}^\hbar$ ($\tau, \tau' \in \mathbb{D}_n$), known as the \emph{Blattner-Kostant-Sternberg (BKS) pairing}. On the interior $\mathring{\mathbb{D}}_n$, the BKS pairing $\langle \quad, \quad \rangle_{\operatorname{BKS}}$ is given as follows: for $\tau, \tau' \in \mathring{\mathbb{D}}_n$, $s \otimes \sqrt{d^n(x, y)_\tau} \in \mathcal{H}_\tau^\hbar$ and $s' \otimes \sqrt{d^n(x, y)_{\tau'}} \in \mathcal{H}_{\tau'}^\hbar$,
\begin{align*}
	\langle s \otimes \sqrt{d^n(x, y)_\tau}, s' \otimes \sqrt{d^n(x, y)_{\tau'}} \rangle_{\operatorname{BKS}} := \langle s, s' \rangle_{L^2} \langle \sqrt{d^n(x, y)_\tau}, \sqrt{d^n(x, y)_{\tau'}} \rangle.
\end{align*}
It is shown in \cite{BMN2010} 
that the BKS pairing can be extended continuously to the boundary of $\mathbb{D}_n$.
In later sections, we shall focus on the quantum Hilbert space $\mathcal{H}^\hbar := \mathcal{H}_{-I_n}^\hbar$ and $\check{\mathcal{H}}^\hbar := \mathcal{H}_{I_n}^\hbar$ for the two specific points $-I_n, I_n$ on the boundary of $\mathbb{D}_n$. By the work of \'Sniatycki \cite{S1977}, $\mathcal{P}^T$-polarized sections of $L^{\otimes k}$ are generated by a compactly supported distributional section with support on each $\hbar$-level Bohr-Sommerfeld fibre of the fibration $\check{\mu}: X \to \check{T}$, and similarly for $\mathcal{P}^{\check{T}}$-polarized sections of $L^{\otimes k}$. We postpone the explicit description of Bohr-Sommerfeld bases of $\mathcal{H}^\hbar$ and $\check{\mathcal{H}}^\hbar$ to the next section, after the introduction to Weil-Brezin transform.

\section{Fibrewise Fourier Transform and Weil-Brezin Transform}
\label{Section 5}
For the purpose of technical calculations in Section \ref{Section 6}, in this section we shall describe transformations of sections of $L^\otimes k$ and functions on $X$ into other spaces.\par
Recall that smooth sections of $L^{\otimes k}$ is identified as elements in $\mathcal{C}_{\operatorname{qper}, k}^\infty(\mathbb{R}^{2n}, \mathbb{C})$. For $k = 0$, (\ref{Equation 4.3}) still makes sense and hence $\mathcal{C}_{\operatorname{qper}, 0}^\infty(\mathbb{R}^{2n}, \mathbb{C})$ is well defined. We simply write $\mathcal{C}_{\operatorname{per}}^\infty(\mathbb{R}^{2n}, \mathbb{C})$ in place of $\mathcal{C}_{\operatorname{qper}, 0}^\infty(\mathbb{R}^{2n}, \mathbb{C})$. In particular, we identify $\mathcal{C}^\infty(X, \mathbb{C})$ as $\mathcal{C}_{\operatorname{per}}^\infty(\mathbb{R}^{2n}, \mathbb{C})$ (for any $m \in \mathbb{N}$, we can indeed define $\mathcal{C}_{\operatorname{per}}^\infty(\mathbb{R}^m, \mathbb{C})$ similarly and have an identification $\mathcal{C}^\infty(\mathbb{R}^m/\mathbb{Z}^m, \mathbb{C}) \cong \mathcal{C}_{\operatorname{per}}^\infty(\mathbb{R}^m, \mathbb{C})$). Then the action of smooth functions on $X$ on smooth sections of $L^{\otimes k}$ is identified as the usual multiplication of smooth functions on $\mathbb{R}^{2n}$:
\begin{align*}
	\mathcal{C}_{\operatorname{per}}^\infty(\mathbb{R}^{2n}, \mathbb{C}) \times \mathcal{C}_{\operatorname{qper}, k}^\infty(\mathbb{R}^{2n}, \mathbb{C}) \to \mathcal{C}_{\operatorname{qper}, k}^\infty(\mathbb{R}^{2n}, \mathbb{C}).
\end{align*}
It would be useful to further transform $\mathcal{C}_{\operatorname{per}}^\infty(\mathbb{R}^{2n}, \mathbb{C})$ and $\mathcal{C}_{\operatorname{qper}, k}^\infty(\mathbb{R}^{2n}, \mathbb{C})$ into other spaces via the fibrewise Fourier transform and the Weil-Brezin transform.

\subsection{Fibrewise Fourier transform}
\label{Subsection 5.1}
\quad\par
In this subsection, we shall perform the fibrewise Fourier transform of smooth functions on $X$ with respect to the smooth fibration $\check{\mu}: X \to \check{T}$ and recollect some standard results of the decay rate of Fourier coefficients in classical Fourier analysis, that is necessary for the norm estimations in Section \ref{Section 6}.

\begin{definition}
	Let $f \in \mathcal{C}^\infty(X, \mathbb{C})$. For $m \in \mathbb{Z}^n$, the $m$\emph{th fibrewise Fourier coefficient} of $f$, denoted by $\widehat{f}_m$, is defined as the periodic function $\widehat{f}_m \in \mathcal{C}_{\operatorname{per}}^\infty(\mathbb{R}^n, \mathbb{C})$ given by
	\begin{equation}
	\widehat{f}_m(y) := \int_{[0, 1]^n} f(x, y) e^{-2\pi \sqrt{-1} m \cdot x} d^nx, \quad \text{ for any } y \in \mathbb{R}^n.
	\end{equation}
	The \emph{fibrewise Fourier transform} of $f$, denoted by $\widehat{f}$, is the defined as the element in
	\begin{align*}
	\prod_{m \in \mathbb{Z}^n} \mathcal{C}_{\operatorname{per}}^\infty(\mathbb{R}^n, \mathbb{C})
	\end{align*}
	given by $m \mapsto \widehat{f}_m$.
\end{definition}
The inverse transform of the fibrewise Fourier transform is
\begin{align*}
f(x, y) = \sum_{m \in \mathbb{Z}^n} \widehat{f}_m(y) e^{2\pi \sqrt{-1} m \cdot x}, \quad \text{for any } x, y \in \mathbb{R}^n.
\end{align*}
\par
For all $m \in \mathbb{Z}^n$, define
\begin{equation}
\label{Equation 5.2}
N_m = \prod_{i \in \{1, ..., n\}: m_i \neq 0} \left( \frac{1}{2\pi \sqrt{-1} m_i} \right).
\end{equation}

In the following lemma, the first part is a standard result, appearing in \cite{SS2002} for instance; the same kind of arguments for the second part appear in \cite{NS1991}.

\begin{lemma}
	\label{Lemma 5.2}
	Let $f \in \mathcal{C}^\infty(\mathbb{R}^n / \mathbb{Z}^n, \mathbb{C}) \cong \mathcal{C}_{\operatorname{per}}^\infty(\mathbb{R}^n, \mathbb{C})$ and $\widehat{f}: \mathbb{Z}^n \to \mathbb{C}$ be its Fourier transform, i.e.
	\begin{align*}
	\widehat{f}_m = \int_{[0, 1]^n} f(x) e^{-2\pi \sqrt{-1} m \cdot x} d^n x, \quad m \in \mathbb{Z}^n.
	\end{align*}
	\begin{enumerate}
		\item Let $r \in \mathbb{N}$ and $C > 0$ be such that for all subset $S$ of $\{1, ..., n\}$ and $x \in \mathbb{R}^n$,
		\begin{align*}
			\left\lvert \left( \prod_{i \in S} \frac{\partial^r}{\partial (x^i)^r} \right) f (x) \right\rvert \leq C.
		\end{align*}
		Then for all $m \in \mathbb{Z}^n$, $\lvert \widehat{f}_m \rvert \leq C \lvert N_m^r \rvert$, where $N_m$ is defined as in (\ref{Equation 5.2}).
		\item Let $r \in \mathbb{N}$. There exists $C_r > 0$ such that for all $k \in \mathbb{N}$,
		\begin{align*}
			\left\lvert \widehat{f}_0 - \frac{1}{k^n} \sum_{[m] \in \mathbb{Z}_k^n} f\left( \frac{m}{k} \right) \right\rvert \leq \frac{C_r}{k^r},
		\end{align*}
		where $\mathbb{Z}_k^n = \mathbb{Z}^n / k\mathbb{Z}^n$ and $[m]$ is the equivalence class of $m \in \mathbb{Z}^n$ in $\mathbb{Z}_k^n$.
	\end{enumerate}
\end{lemma}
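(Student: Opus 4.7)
The plan is to prove part (1) by iterated integration by parts and then derive part (2) from a discrete Poisson-summation identity together with part (1). For part (1), fix $m \in \mathbb{Z}^n$ and let $S = \{i : m_i \neq 0\}$. Starting from
\[
\widehat{f}_m = \int_{[0,1]^n} f(x) \, e^{-2\pi\sqrt{-1} m \cdot x} \, d^n x,
\]
I would integrate by parts $r$ times in each variable $x^i$ with $i \in S$; the boundary terms vanish by periodicity of $f$, and each integration in $x^i$ pulls out a factor of $1/(2\pi\sqrt{-1}m_i)$ from the exponential. After doing this in every coordinate $i \in S$ the integrand becomes $N_m^r \cdot \bigl( \prod_{i \in S} \partial^r/\partial (x^i)^r \bigr) f(x) \cdot e^{-2\pi\sqrt{-1} m \cdot x}$, whose absolute value is bounded by $|N_m^r| \cdot C$. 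Integrating over the unit cube yields $|\widehat{f}_m| \leq C |N_m^r|$, as required.

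For part (2) I would first set up the elementary identity
\[
\frac{1}{k^n} \sum_{[m] \in \mathbb{Z}_k^n} f\!\left(\frac{m}{k}\right) = \sum_{\ell \in k\mathbb{Z}^n} \widehat{f}_\ell,
\]
obtained by substituting the Fourier series of $f$ into the Riemann sum and using that $\sum_{[m] \in \mathbb{Z}_k^n} e^{2\pi\sqrt{-1}\ell \cdot m/k}$ equals $k^n$ when $\ell \in k\mathbb{Z}^n$ and vanishes otherwise (the validity of the interchange follows from rapid decay of $\widehat{f}_\ell$, which is guaranteed by part (1) for any $r \geq 2$). Isolating the $\ell = 0$ term gives
\[
\widehat{f}_0 - \frac{1}{k^n} \sum_{[m] \in \mathbb{Z}_k^n} f\!\left(\frac{m}{k}\right) = -\sum_{\ell \in k\mathbb{Z}^n \setminus \{0\}} \widehat{f}_\ell.
\]

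To bound the right-hand side, I would apply part (1) with the exponent $r$ appearing in the statement (using the constant $C$ coming from the finitely many derivatives of $f$), writing $\ell = k\ell'$ with $\ell' \in \mathbb{Z}^n \setminus \{0\}$. Then the nonzero coordinates of $\ell$ coincide with those of $\ell'$, each of absolute value $k|\ell'_i|$, so $|N_\ell^r| = (2\pi k)^{-r|S(\ell')|} \prod_{i \in S(\ell')} |\ell'_i|^{-r}$ where $S(\ell') = \{i : \ell'_i \neq 0\}$. Summing over $\ell' \neq 0$ by splitting according to the support $S$ gives
\[
\sum_{\ell' \neq 0} |N_{k\ell'}^r| = \sum_{\emptyset \neq S \subseteq \{1,\ldots,n\}} \frac{(2\zeta(r))^{|S|}}{(2\pi k)^{r|S|}},
\]
which (taking $r \geq 2$ so $\zeta(r) < \infty$) is bounded by a constant multiple of $k^{-r}$, the dominant term coming from $|S|=1$. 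Multiplying by $C$ yields the desired constant $C_r$.

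The proof is mostly routine; the only subtlety is ensuring that the lemma as written can absorb the reduction step of part (2), which requires part (1) with $r \geq 2$ regardless of the $r$ appearing in part (2). Since $f$ is smooth on a compact torus, all the derivative bounds in the hypothesis of part (1) hold for every $r$, so there is no issue in invoking part (1) with a sufficiently large exponent and then choosing $C_r$ to incorporate both the constant $C$ and the convergent sum over $\ell'$.
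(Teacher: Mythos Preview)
Your proposal is correct and follows essentially the same route as the paper: part (1) is proved by iterated integration by parts in the coordinates $i$ with $m_i\neq 0$, and part (2) by the discrete Poisson-summation identity $\frac{1}{k^n}\sum_{[m]}f(m/k)-\widehat{f}_0=\sum_{p\neq 0}\widehat{f}_{kp}$ followed by the decay bound from part (1), with the tail sum organized according to the support of the index (your split over nonempty $S\subseteq\{1,\dots,n\}$ is exactly the paper's partition $\mathbb{Z}^n=\bigsqcup_i Z_i$). Your remark that one may assume $r\geq 2$ to ensure convergence of the $\zeta$-sum, and then recover $r=1$ a fortiori, is also how the paper handles it.
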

\begin{proof}
	\begin{enumerate}
		\item 
		Fix $m \in \mathbb{Z}^n$. Define the differential operator
		\begin{align*}
		D_m = \prod_{i \in \{1, ..., n\}: m_i \neq 0} \left( \frac{\partial}{\partial x^i} \right).
		\end{align*}
		By applying integration by parts iteratively,
		\begin{align*}
		\widehat{f}_m = \int_{[0, 1]^n} f(x) e^{-2\pi \sqrt{-1} m \cdot x} d^n x = N_m^r \int_{[0, 1]^n} (D_m^r f)(x) e^{-2\pi \sqrt{-1} m \cdot x} d^n x,
		\end{align*}
		whence we have $\lvert \widehat{f}_m \rvert \leq \lvert N_m^r \rvert \cdot \int_{[0, 1]^n} \lvert D_m^r f (x) \rvert d^n x \leq C \lvert N_m^r \rvert$.
		\item We partition $\mathbb{Z}^n$ into the disjoint union $\mathbb{Z}^n = \bigsqcup_{i=0}^n Z_i$, where for all $i \in \{0, ..., n\}$, $Z_i$ is the set of $p \in \mathbb{Z}^n$ such that the number of non-zero $p_j$'s ($j \in \{1, ..., n\}$) is $i$. Clearly, $Z_0$ is the singleton set containing the zero element in $\mathbb{Z}^n$. It suffices to consider $r \in \mathbb{N}$ with $r \geq 2$. Since $\mathbb{R}^n / \mathbb{Z}^n$ is compact and $f$ is periodic, there exists $C_r > 0$ be such that for all subset $S$ of $\{1, ..., n\}$ and $x \in \mathbb{R}^n$, $\left\lvert \left( \prod_{i \in S} \frac{\partial^r}{\partial (x^i)^r} \right) f (x) \right\rvert \leq C_r$. Therefore, for all $p \in \mathbb{Z}^n$, $\lvert \widehat{f}_p \rvert \leq C_r \lvert N_p^r \rvert$.\par
		Fix $k \in \mathbb{N}$. We have
		\begin{align*}
			\frac{1}{k^n} \sum_{[m] \in \mathbb{Z}_k^n} f\left( \frac{m}{k} \right) - \widehat{f}_0 = \sum_{p \in \mathbb{Z}^n \backslash Z_0} \widehat{f}_p \frac{1}{k^n} \sum_{[m] \in \mathbb{Z}_k^n} e^{2\pi \sqrt{-1} k^{-1} p \cdot m} = \sum_{p \in \mathbb{Z}^n \backslash Z_0} \widehat{f}_{kp}.
		\end{align*}
		Denote by $\zeta(z)$ the Riemann zeta function. Note that $\sum_{m \in \mathbb{Z} \backslash \{0\}} \tfrac{1}{(2\pi k \lvert m \rvert)^r} = \tfrac{2\zeta(r)}{(2\pi k)^r}$. Then
		\begin{align*}
		\left\lvert \sum_{p \in \mathbb{Z}^n \backslash Z_0} \widehat{f}_{kp} \right\rvert \leq \sum_{i=1}^n \sum_{p \in Z_i} \left\lvert \widehat{f}_{kp} \right\rvert \leq C_r \sum_{i=1}^n \sum_{p \in Z_i} \lvert N_{kp}^r \rvert = \sum_{i=1}^n \frac{C_r}{k^{ri}} \binom{n}{i} \left( \frac{\zeta(r)}{(2\pi)^r} \right)^i.
		\end{align*}
	\end{enumerate}
\end{proof}

\begin{lemma}
	\label{Lemma 5.3}
	Let $f \in \mathcal{C}^\infty(X, \mathbb{C})$, $I$ be any multi-index and $r \in \mathbb{N}$. For $m \in \mathbb{Z}^n$, define $N_m$ as in (\ref{Equation 5.2}). Then the following statements hold.
	\begin{enumerate}
		\item There exists $C_{f, I, r} > 0$ such that for all $m \in \mathbb{Z}^n$ and $y \in \mathbb{R}^n$, $\left\lvert m^I \widehat{f}_m(y) \right\vert \leq C_{f, I, r} \lvert N_m^r \rvert$.
		\item There exists $C'_{f, I, r} > 0$ such that for all $m \in \mathbb{Z}^n$ and $y \in \mathbb{R}^n$, $\left\lvert \widehat{f}_m^{(I)}(y) \right\vert \leq C'_{f, I, r} \lvert N_m^r \rvert$, where $\widehat{f}_m^{(I)}(y) = \tfrac{\partial^{\lvert I \rvert}}{\partial y^I} \widehat{f}_m(y)$.
	\end{enumerate}
\end{lemma}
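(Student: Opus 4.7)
The plan is to reduce both estimates to Lemma \ref{Lemma 5.2}(1) applied fibrewise in the $x$-variable, using integration by parts to pull down $m^I$ for part (1) and differentiation under the integral sign to convert $y$-derivatives of $\widehat{f}_m$ into fibrewise Fourier coefficients of $y$-derivatives of $f$ for part (2). Uniformity of constants in $y$ will come from compactness of $X = T \times \check{T}$ and smoothness of $f$.

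For part (1), I would first observe that for each fixed $y \in \mathbb{R}^n$, the function $f(\cdot, y) \in \mathcal{C}^\infty(\mathbb{R}^n/\mathbb{Z}^n, \mathbb{C})$ is smooth and periodic in $x$, so $|I|$-fold integration by parts in the $x$-variables yields
\begin{equation*}
m^I \widehat{f}_m(y) \;=\; \left( \frac{1}{2\pi\sqrt{-1}} \right)^{|I|} \int_{[0,1]^n} \frac{\partial^{|I|} f}{\partial x^I}(x,y)\, e^{-2\pi\sqrt{-1}\, m \cdot x}\, d^n x,
\end{equation*}
where all boundary terms vanish by $x$-periodicity. The right-hand side is the $m$th fibrewise Fourier coefficient of $\partial_x^I f(\cdot, y)$ up to the constant factor. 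Now apply Lemma \ref{Lemma 5.2}(1) to this periodic function of $x$: I need a constant $C$ bounding $\left\lvert \left( \prod_{i \in S} \partial_{x^i}^r \right) \partial_x^I f(x, y) \right\rvert$ uniformly over subsets $S \subset \{1, \dots, n\}$ and points $x \in \mathbb{R}^n$. Since $\partial_x^I f$ and its further derivatives are continuous on the compact manifold $X$, taking the supremum over $(x, y) \in X$ and over the finitely many subsets $S$ gives a finite constant $C_{f, I, r} > 0$ that is independent of $y$, and Lemma \ref{Lemma 5.2}(1) then yields $\bigl\lvert m^I \widehat{f}_m(y) \bigr\rvert \leq C_{f, I, r} |N_m^r|$ as desired.

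For part (2), standard differentiation under the integral sign (legal because $f$ and its $y$-derivatives are continuous on $[0,1]^n \times \mathbb{R}^n$) gives
\begin{equation*}
\widehat{f}_m^{(I)}(y) \;=\; \int_{[0,1]^n} \frac{\partial^{|I|} f}{\partial y^I}(x,y)\, e^{-2\pi\sqrt{-1}\, m \cdot x}\, d^n x,
\end{equation*}
so that $\widehat{f}_m^{(I)}(y)$ is itself the $m$th fibrewise Fourier coefficient of the smooth function $\partial_y^I f$. Applying Lemma \ref{Lemma 5.2}(1) to $\partial_y^I f(\cdot, y)$ for each fixed $y$, and again using compactness of $X$ to bound $\bigl\lvert (\prod_{i \in S} \partial_{x^i}^r) \partial_y^I f(x, y) \bigr\rvert$ uniformly in $(x, y)$ and $S$, produces the required constant $C'_{f, I, r} > 0$ independent of $y$ and the estimate $\bigl\lvert \widehat{f}_m^{(I)}(y) \bigr\rvert \leq C'_{f, I, r} |N_m^r|$.

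There is no substantive obstacle; the lemma is essentially a parametric version of the classical fact that Fourier coefficients of smooth periodic functions decay faster than any polynomial. The only care required is to ensure the Lemma \ref{Lemma 5.2}(1) constants can be chosen independent of $y$, which is immediate from the compactness of $X$ and the smoothness of $f$ (hence uniform boundedness on $X$ of every mixed partial derivative appearing).
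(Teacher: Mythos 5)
Your proof is correct and follows essentially the same route as the paper's: both identify $m^I \widehat{f}_m$ and $\widehat{f}_m^{(I)}$ as the $m$th fibrewise Fourier coefficients of $(2\pi\sqrt{-1})^{-\lvert I \rvert}\partial_x^I f$ and $\partial_y^I f$ respectively, then apply Lemma \ref{Lemma 5.2}(1) fibrewise with a constant made uniform in $y$ via compactness of $X$. No gaps.
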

\begin{proof}
	As $X$ is compact and $f$ is periodic, there exists $C_{f, r} > 0$ such that for all subset $S$ of $\{1, ..., n\}$ and $x, y \in \mathbb{R}^n$,
	\begin{align*}
	\left\lvert \left( \prod_{i \in S} \frac{\partial^r}{\partial (x^i)^r} \right) f(x, y) \right\rvert \leq C_{f, r}.
	\end{align*}
	Then by Lemma \ref{Lemma 5.2}, for all $m \in \mathbb{Z}^n$ and $y \in \mathbb{R}^n$, $\lvert \widehat{f}_m(y) \rvert \leq C_{f, r} \lvert N_m^r \rvert$. Now consider any multi-index $I$. Note that $(2\pi\sqrt{-1})^{-\lvert I \rvert} \tfrac{\partial^{\lvert I \rvert} f}{\partial x^I}$ and $\frac{\partial^{\lvert I \rvert} f}{\partial y^I}$ are compactly supported smooth functions on $X$ as well and for each $m \in \mathbb{Z}^n$, the $m$th fibrewise Fourier coefficient of $(2\pi\sqrt{-1})^{-\lvert I \rvert} \tfrac{\partial^{\lvert I \rvert} f}{\partial x^I}$ is $m^I \hat{f}_m$ while that of $\tfrac{\partial^{\lvert I \rvert} f}{\partial y^I}$ is $\widehat{f}_m^{(I)}$. Repeating the above arguments, we are done.
\end{proof}

\subsection{Weil-Brezin transform}
\quad\par
In this subsection, we shall introduce the Weil-Brezin transform \cite{F1989} of sections of $L^{\otimes k}$, which is parallel to the fibrewise Fourier transform of functions on $X$ in subsection \ref{Subsection 5.1}. Define $\mathbb{Z}_k = \mathbb{Z}/k\mathbb{Z}$ to be the cyclic group of order $k$. For $m \in \mathbb{Z}^n$, denote by $[m]$ its equivalence class $m + k\mathbb{Z}^n$ in $\mathbb{Z}_k^n$. We have another way to describe sections of $L^{\otimes k}$. If $s$ is a smooth section of $L^{\otimes k}$, then the function $\tilde{s}$ on $\mathbb{R}^{2n}$ defined by
\begin{equation}
\tilde{s}(x, y) := e^{k \pi \sqrt{-1} x \cdot y} s(x, y)
\end{equation}
is periodic in $x$ but not periodic in $y$: for all $\lambda \in \mathbb{Z}^n$,
\begin{align*}
\tilde{s}(x + \lambda, y) = & \tilde{s}(x, y),\\
\tilde{s}(x, y + \lambda) = & 
e^{k \pi \sqrt{-1} x \cdot (y + 2\lambda)} \tilde{s}(x, y).
\end{align*}
These observations lead to an isomorphism given by the Weil-Brezin expansion.

\begin{definition}
	Let $s \in \Gamma^\infty(X, L^{\otimes k}) \cong \mathcal{C}_{\operatorname{qper}, k}^\infty(\mathbb{R}^{2n}, \mathbb{C})$. For $[m] \in \mathbb{Z}_k^n$, The $[m]$\emph{th Weil-Brezin coefficient} of $s$, denoted by $(s)_{[m]}$, is defined as the Schwartz function $(s)_{[m]} \in \mathcal{S}(\mathbb{R}^{2n})$ given by
	\begin{align*}
		(s)_{[m]}(y) := \int_{[0, 1]^n} \tilde{s}(x, y + \hbar m) e^{-2\pi\sqrt{-1} m \cdot x} d^nx, \quad \text{for any } y \in \mathbb{R}^n.
	\end{align*}
	The \emph{Weil-Brezin transform} of $s$, denoted by $(s)$, is defined as
	\begin{align*}
		(s) \in \prod_{[m] \in \mathbb{Z}_k^n} \mathcal{S}(\mathbb{R}^n)
	\end{align*}
	given by $[m] \mapsto (s)_{[m]}$.
\end{definition}

The Weil-Brezin transform is an isomorphism between topological vector spaces $\Gamma^\infty(X, L^{\otimes k})$ and $\prod_{[m] \in \mathbb{Z}_k^n} \mathcal{S}(\mathbb{R}^n)$. The inverse formula is as follows. For $s \in \Gamma^\infty(X, L^{\otimes k})$, we have
\begin{equation}
	s(x, y) = e^{-k\pi \sqrt{-1} y \cdot x} \sum_{[m] \in \mathbb{Z}_k^n} \sum_{p \in [m]} (s)_{[m]}(y - p) e^{2 \pi \sqrt{-1} p \cdot x}.
\end{equation}
Thus, the Weil-Brezin transform extends to an isomorphism $\Gamma^{-\infty}(X, L^{\otimes k}) \cong \prod_{[m] \in \mathbb{Z}_k^n} \mathcal{S}'(\mathbb{R}^n)$. This map in turn restricts to a unitary isomorphism:
\begin{align*}
\Gamma_{L^2}(X, L^{\otimes k}) \cong \prod_{[m] \in \mathbb{Z}_k^n} L^2(\mathbb{R}^n).
\end{align*}
In particular, for all $a, b \in \Gamma_{L^2}(X, L^{\otimes k})$,
\begin{equation}
\langle a, b \rangle_{L^2} = \sum_{[m] \in \mathbb{Z}_k^n} \int_{\mathbb{R}^n} (a)_{[m]}(y) \overline{(b)_{[m]}(y)} d^ny.
\end{equation}
For $f \in \mathcal{C}^\infty(X ,\mathbb{C})$ and $s \in \Gamma^\infty(X, L^{\otimes k})$, the Weil-Brezin transform of $fs$ can be expressed in terms of the fibrewise Fourier transform $\widehat{f}$ of $f$ and the Weil-Brezin transform $(s)$ of $s$: for $[m] \in \mathbb{Z}_k^n$ and $y \in \mathbb{R}^{2n}$,
\begin{equation}
\label{Equation 5.6}
(fs)_{[m]}(y)
= \sum_{p \in \mathbb{Z}^n} \widehat{f}_p(y + \hbar m) (s)_{[m - p]}(y + \hbar p).
\end{equation}
Our next task is to describe the weak equation of covariant constancy via the Weil-Brezin transform. Fix $\tau \in \mathbb{D}_n$. We define the first order differential operators $\Xi_\tau^\hbar: \mathcal{S}'(\mathbb{R}^n) \to \mathcal{S}'(\mathbb{R}^n)^n$ by
\begin{align*}
\Xi_\tau^\hbar = (I_n + \tau) \frac{\partial}{\partial y} + 2k\pi (I_n - \tau) y.
\end{align*}
From Lemma 3.6 in \cite{BMN2010}, the Hilbert space of quantum states can be expressed as
\begin{equation}
\mathcal{H}_\tau^\hbar = \left\{ s \in \Gamma^{-\infty}(X, L^{\otimes k}): (s) \in \prod_{[m] \in \mathbb{Z}_k^n} \ker \Xi_\tau^\hbar \right\} \otimes \mathbb{C} \sqrt{d^n(x, y)_\tau}.
\end{equation}
Consider the case when $\tau \in \mathring{\mathbb{D}}_n$. For $m \in \mathbb{Z}_k^n$, define $\vartheta_{\tau, \hbar}^m \in \Gamma^\infty(X, L^{\otimes k})$ via its Weil-Brezin coefficients:
\begin{equation}
( \vartheta_{\tau, \hbar}^m )_{m'}(y) = \delta_{m, m'} \det(I_n + \tau)^{-\frac{1}{2}}  e^{k\pi\sqrt{-1} y \cdot \Omega(\tau)y}, \quad m' \in \mathbb{Z}_k^n.
\end{equation}
Furthermore, we define
\begin{equation}
\sigma_{\tau, \hbar}^m := 2^{\frac{n}{4}} k^{\frac{n}{2}} \vartheta_{\tau, \hbar}^m \sqrt{d^n(x, y)_\tau}.
\end{equation}
By Theorem 3.10 in \cite{BMN2010}, for each $m \in \mathbb{Z}_k^n$, the assignment $\mathring{\mathbb{D}}_n \ni \tau \mapsto \vartheta_{\tau, \hbar}^m \in \Gamma^\infty(X, L^{\otimes k})$ extends continuously to a map $\mathbb{D}_n \to \Gamma^{-\infty}(X, L^{\otimes k})$, and for any point $\tau$ on the boundary of $\mathbb{D}_n$, the element $\sigma_{\tau, \hbar}^m := 2^{\frac{n}{4}} k^{\frac{n}{2}} \vartheta_{\tau, \hbar}^m \sqrt{d^n(x, y)_\tau}$ lies in $\mathcal{H}_\tau^\hbar$. We are particularly interested in the case when $\tau = I_n$ and when $\tau = -I_n$. We simplify the notations by rewriting $\vartheta_{-I_n, \hbar}^m$ as $\vartheta_\hbar^m$, $\sigma_{-I_n, \hbar}^m$ as $\sigma_\hbar^m$ and $\sqrt{d^n(x, y)_{-I_n}}$ as $\rho$, (resp. $\vartheta_{I_n, \hbar}^m$ as $\check{\vartheta}_\hbar^m$, $\sigma_{I_n, \hbar}^m$ as $\check{\sigma}_\hbar^m$ and $\sqrt{d^n(x, y)_{I_n}}$ as $\check{\rho}$).

\begin{itemize}
	\item Consider the case when $\tau = -I_n$, corresponding to polarization $\mathcal{P}^T$. Then $\Xi_{-I_n}^\hbar = 4k\pi y$. Hence, $\ker \Xi_{-I_n}^\hbar$ is spanned by the Dirac delta distribution supported at $y = 0$. For $m, m' \in \mathbb{Z}_k^n$, we see that for all $f \in \mathcal{S}(\mathbb{R}^n)$,
	\begin{equation}
	\label{Equation 5.10}
	(\vartheta_\hbar^m)_{m'}(f) = \delta_{m, m'} (2k)^{-\frac{n}{2}} f(0).
	\end{equation}
	We can also verify that for all $\tau \in \Gamma^\infty(X, L^{\otimes k})$,
	\begin{equation}
	\langle \vartheta_\hbar^m, \tau \rangle = (2k)^{-\frac{n}{2}} \int_{[0, 1]^n} e^{\pi\sqrt{-1} \tilde{m} \cdot x} \overline{\tau(x, \hbar \tilde{m})} d^nx,
	\end{equation}
	where $\tilde{m} \in \mathbb{Z}^n$ is any representative of $m$.
	\item Consider the case when $\tau = I_n$, corresponding to polarization $\mathcal{P}^{\check{T}}$. Then $\Xi_{I_n}^\hbar = 2 \partial_y$. Thus, solutions are constant functions. For $m, m' \in \mathbb{Z}_k^n$, we see that for all $y \in \mathbb{R}^n$,
	\begin{equation}
	\label{Equation 5.12}
	(\check{\vartheta}_\hbar^m)_{m'}(y) = \delta_{m, m'} 2^{-\frac{n}{2}}.
	\end{equation}
	We can also verify that for all $\tau \in \Gamma^\infty(X, L^{\otimes k})$,
	\begin{equation}
		\langle \check{\vartheta}_\hbar^m, \tau \rangle = 2^{-\frac{n}{2}} \int_{[0, 1]^n} e^{-\pi\sqrt{-1} \tilde{m} \cdot y} \overline{\tau(\hbar \tilde{m}, y)} d^ny,
	\end{equation}
	where $\tilde{m} \in \mathbb{Z}^n$ is any representative of $m$.
\end{itemize}
We can evaluate by direct calculation that for $m, m' \in \mathbb{Z}_k^n$,
\begin{equation}
	\label{Equation 5.14}
	\left\langle \sigma_\hbar^m, \check{\sigma}_\hbar^{m'} \right\rangle_{\operatorname{BKS}} = \delta_{m, m'}.
\end{equation}

\section{Berezin-Toeplitz Quantization on Symplectic Tori}
\label{Section 6}
This is the main section of this paper. We shall define a Toeplitz-type operator
\begin{align*}
Q^\hbar: \mathcal{C}^\infty(X, \mathbb{C}) \times \mathcal{H}^\hbar \to \mathcal{H}^\hbar, \quad (f, s) \mapsto Q_f^\hbar s
\end{align*}
acting on the quantum Hilbert space $\mathcal{H}^\hbar$ in the real polarization $\mathcal{P}^T$ and provide proofs of our main theorems.

\subsection{Construction of Toeplitz operators for a pair of transversal real polarizations}
\quad\par
The key issue for constructing a Toeplitz-type operator in our case is to overcome the difficulty that quantum states in real polarizations $\mathcal{P}^T, \mathcal{P}^{\check{T}}$ are distributional sections, and to find the analogue to the projection $\Pi^\hbar$ appeared in (\ref{Equation 2.1}), which can be represented as
\begin{align}
\label{Equation 6.2}
\Pi^\hbar s = \sum_{i \in I} \langle s, e_\hbar^i \rangle_{L^2} e_\hbar^i,
\end{align}
where $\{ e_\hbar^i \}_{i \in I}$ is a unitary basis in $H^0(X, L^{\otimes k})$, for $X$ being compact K\"ahler. In our case when $X = T \times \check{T}$, instead of a basis of a single quantum Hilbert space, we have bases $\{ \sigma_\hbar^m \}_{m \in \mathbb{Z}_k^n}$ and $\{ \check{\sigma}_\hbar^m \}_{m \in \mathbb{Z}_k^n}$ of $\mathcal{H}^\hbar$ and $\check{\mathcal{H}}^\hbar$ respectively, satisfying the relation (\ref{Equation 5.14}) so that they can be regarded as dual bases of each other (see Section \ref{Section 5}). Motivated by (\ref{Equation 6.2}), we define the (distributional) kernel
\begin{equation}
K^\hbar = \sum_{m \in \mathbb{Z}_k^n} \check{\sigma}_\hbar^m \otimes \sigma_\hbar^m \in \check{\mathcal{H}}^\hbar \otimes_\mathbb{C} \mathcal{H}^\hbar
\end{equation}
and hope to define $\Pi^\hbar$ by the expression $\Pi^\hbar s = \sum_{m \in \mathbb{Z}_k^n} \langle s, \check{\sigma}_\hbar^m \rangle \sigma_\hbar^m$ for some pairing $\langle \quad, \quad \rangle$ between distributional sections of $L^{\otimes k}$ and $\check{\mathcal{H}}^\hbar$, extending the BKS pairing $\langle \quad, \quad \rangle_{\operatorname{BKS}}: \mathcal{H}^\hbar \times \check{\mathcal{H}}^\hbar \to \mathbb{C}$.\par
In general we do not have a pairing on the space of distributional sections of $L^{\otimes k}$. For our purpose, however, it suffices to pair the spaces $\Gamma_{\operatorname{WF}^\hbar}^{-\infty}(X, L^{\otimes k})$ and $\Gamma_{\check{\operatorname{WF}}^\hbar}^{-\infty}(X, L^{\otimes k})$ of distributional sections of $L^{\otimes k}$ with wavefront sets lying in the unions $\operatorname{WF}^\hbar$ and $\check{\operatorname{WF}}^\hbar$ of wavefront sets of elements in $\mathcal{H}^\hbar$ and $\check{\mathcal{H}}^\hbar$ respectively. Note that the unions $\operatorname{BS}^\hbar$ and $\check{\operatorname{BS}}^\hbar$ of level-$\hbar$ Bohr Sommerfeld fibres for the polarizations $\mathcal{P}^T$ and $\mathcal{P}^{\check{T}}$ respectively are $n$-dimensional submanifolds of $X$ intersecting transversally. One can show that $\operatorname{WF}^\hbar$ and $\check{\operatorname{WF}}^\hbar$ are conormal bundles of $\operatorname{BS}^\hbar$ and $\check{\operatorname{BS}}^\hbar$ respectively, with zero sections removed. 
Then $\operatorname{WF}^\hbar$, $\check{\operatorname{WF}}^\hbar$ satisfy a variant of the \emph{H\"ormander's criterion} \cite{H2003} (see Subsection 4.14 in \cite{MU2008} and Proposition 6.10 in \cite{C} as well), namely $\operatorname{WF}^\hbar \cap \check{\operatorname{WF}}^\hbar = \emptyset$. It guarantees that we have a pairing
\begin{equation}
\label{Equation 6.4}
\langle \quad, \quad \rangle: \Gamma_{\operatorname{WF}^\hbar}^{-\infty}(X, L^{\otimes k}) \times \Gamma_{\check{\operatorname{WF}}^\hbar}^{-\infty}(X, L^{\otimes k}) \to \mathbb{C}
\end{equation}
which coincides with $\langle \quad, \quad \rangle_{L^2}$ when restricted to $L^2$ sections. By in-cooperating with the pairing on half-forms, we have a pairing
\begin{equation*}
\langle \quad, \quad \rangle: \Gamma_{\operatorname{WF}^\hbar}^{-\infty}(X, L^{\otimes k}) \otimes \mathbb{C} \rho \times \Gamma_{\check{\operatorname{WF}}^\hbar}^{-\infty}(X, L^{\otimes k}) \otimes \mathbb{C} \check{\rho} \to \mathbb{C}
\end{equation*}
extending the BKS pairing $\langle \quad, \quad \rangle_{\operatorname{BKS}}: \mathcal{H}^\hbar \times \check{\mathcal{H}}^\hbar \to \mathbb{C}$. We can now well define a $\mathbb{C}$-linear map
\begin{equation}
\Pi^\hbar: \Gamma_{\operatorname{WF}^\hbar}^{-\infty}(X, L^{\otimes k}) \otimes \mathbb{C} \rho \to \mathcal{H}^\hbar, \quad s \mapsto \Pi^\hbar s := (\langle \quad, \quad \rangle \otimes \operatorname{Id})(s \otimes K^\hbar).
\end{equation}

It is evident that $\Pi^\hbar$ is a projection. As multiplication by a smooth function never enlarges the wavefront set of a distributional section, the following definition makes sense.

\begin{definition}
	The \emph{Toeplitz operator for the pair of polarizations} $(\mathcal{P}^{\check{T}}, \mathcal{P}^T)$ is the map
	\begin{align*}
	Q^\hbar: \mathcal{C}^\infty(X, \mathbb{C}) \times \mathcal{H}^\hbar \to \mathcal{H}^\hbar, \quad (f, s) \mapsto Q_f^\hbar s := \Pi^\hbar(fs).
	\end{align*}
\end{definition}

The following proposition expresses this Toeplitz operator explicitly in terms of a basis.

\begin{proposition}
	Suppose $f \in \mathcal{C}^\infty(X, \mathbb{C})$. If $s = \sum_{[m] \in \mathbb{Z}_k^n} s_{[m]} \sigma_\hbar^{[m]} \in \mathcal{H}^\hbar$, then
	\begin{equation}
	\label{Equation 6.6}
	Q_f^\hbar s = \sum_{[m] \in \mathbb{Z}_k^n} \left( \sum_{m' \in \mathbb{Z}^n} \widehat{f}_{m - m'}(\hbar m') s_{[m']} \right) \sigma_\hbar^{[m]}.
	\end{equation}
\end{proposition}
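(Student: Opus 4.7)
The plan is to evaluate $Q_f^\hbar s$ coefficient by coefficient in the basis $\{\sigma_\hbar^{[m]}\}_{[m]\in\mathbb{Z}_k^n}$ of $\mathcal{H}^\hbar$. Since the kernel $K^\hbar$ uses $\{\check{\sigma}_\hbar^{[m]}\}$ as its dual basis by (\ref{Equation 5.14}), the projection acts as $\Pi^\hbar(t) = \sum_{[m]} \langle t, \check{\sigma}_\hbar^{[m]}\rangle\, \sigma_\hbar^{[m]}$. By $\mathbb{C}$-linearity in $s$, it suffices to verify (\ref{Equation 6.6}) on the basis vector $s = \sigma_\hbar^{[m']}$; after substituting $m'' = m - p$, the right-hand side of (\ref{Equation 6.6}) restricted to this vector becomes $\sum_{p \in \mathbb{Z}^n,\ p \equiv m-m' \pmod k} \widehat{f}_p(\hbar(m-p))\,\sigma_\hbar^{[m]}$, so the task reduces to showing
\[
\langle f\sigma_\hbar^{[m']}, \check{\sigma}_\hbar^{[m]}\rangle = \sum_{p \in \mathbb{Z}^n,\ p \equiv m-m' \pmod k} \widehat{f}_p(\hbar(m-p)).
\]

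To obtain this identity I will split off the half-form factor. Since $\sigma_\hbar^{[m]} = 2^{n/4}k^{n/2}\vartheta_\hbar^{[m]}\rho$ and $\check{\sigma}_\hbar^{[m]} = 2^{n/4}k^{n/2}\check{\vartheta}_\hbar^{[m]}\check{\rho}$, and since $\langle\rho,\check{\rho}\rangle = 2^{n/2}k^{-n/2}$ by (\ref{Equation 4.10}), the matrix coefficient equals $2^nk^{n/2}\,\langle f\vartheta_\hbar^{[m']}, \check{\vartheta}_\hbar^{[m]}\rangle$. The line-bundle pairing is then computed through the Weil-Brezin transform: formula (\ref{Equation 5.6}) applied to the distributional section $f\vartheta_\hbar^{[m']}$, together with the Dirac-delta description (\ref{Equation 5.10}) of the Weil-Brezin coefficients of $\vartheta_\hbar^{[m']}$, yields
\[
(f\vartheta_\hbar^{[m']})_{[\ell]}(y) \;=\; (2k)^{-n/2}\sum_{p \equiv \ell-m' \pmod k}\widehat{f}_p(y+\hbar\ell)\,\delta(y+\hbar p)
\]
as a distribution in $y$. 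Pairing coefficient by coefficient against the constant Weil-Brezin coefficients $(\check{\vartheta}_\hbar^{[m]})_{[\ell]}(y) = \delta_{m,\ell}2^{-n/2}$ from (\ref{Equation 5.12}) selects $\ell = m$ and integrates each Dirac mass, so that $\widehat{f}_p(y+\hbar m)$ is evaluated at $y=-\hbar p$. The normalization constants collapse ($2^nk^{n/2}\cdot (2k)^{-n/2}\cdot 2^{-n/2}=1$), leaving the desired sum.

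The main point requiring justification, rather than a genuine obstacle, is the legitimacy of this termwise pairing: neither side lies in $L^2$ and the Parseval-style formula $\langle a, b\rangle = \sum_{[\ell]}\int (a)_{[\ell]}\overline{(b)_{[\ell]}}\,d^ny$ has no \emph{a priori} meaning on distributional sections. This is precisely what the distributional pairing (\ref{Equation 6.4}) provides: the H\"ormander condition $\operatorname{WF}^\hbar \cap \check{\operatorname{WF}}^\hbar = \emptyset$ makes the relevant wavefront sets transverse, so the $L^2$-inner product extends continuously to this setting and can be computed by the formal substitution above. Once this is in place the remaining manipulations are algebraic; reindexing $m'' = m-p$ and extending linearly to $s = \sum_{[m']} s_{[m']}\sigma_\hbar^{[m']}$ then delivers (\ref{Equation 6.6}).
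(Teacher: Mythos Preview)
Your proof is correct and follows essentially the same route as the paper: compute the matrix coefficients $\langle fs, \check{\sigma}_\hbar^{[m]}\rangle$ through the Weil-Brezin transform and track the half-form normalizations. The only difference is that the paper moves $f$ across the sesquilinear pairing, writing $\langle \vartheta_\hbar^{[m']}, \overline{f}\check{\vartheta}_\hbar^{[m]}\rangle$, so that the Weil-Brezin coefficient of $\overline{f}\check{\vartheta}_\hbar^{[m]}$ is a genuine smooth function which the Dirac delta $(\vartheta_\hbar^{[m']})_{[m']}$ simply evaluates at $y=0$; this sidesteps having to interpret the infinite sum of shifted Dirac masses in your $(f\vartheta_\hbar^{[m']})_{[\ell]}$, but the two computations are otherwise identical.
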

\begin{proof}
	We take advantage of the Weil-Brezin transform and observe that the pairing (\ref{Equation 6.4}) can be expressed as the sum of componentwise natural (sesquilinear) pairing between compactly supported distributions and smooth functions on $\mathbb{R}^n$. Fix $[m], [m'] \in \mathbb{Z}_k^n$. By (\ref{Equation 5.6}) and (\ref{Equation 5.12}), for all $y \in \mathbb{R}^n$,
	\begin{align*}
		(\overline{f} \check{\vartheta}_\hbar^{[m]})_{[m']} (y) = & 2^{-\frac{n}{2}} \sum_{p \in [m']} \overline{\widehat{f}_{-(m'-p)}(y + \hbar m')}.
	\end{align*}
	Then by (\ref{Equation 5.10}),
	\begin{align*}
		\left\langle f s_{[m']} \vartheta_\hbar^{[m']}, \check{\vartheta}_\hbar^{[m]} \right\rangle = s_{[m']} \left\langle \vartheta_\hbar^{[m']}, \overline{f} \check{\vartheta}_\hbar^{[m]} \right\rangle = 2^{-n} k^{-\frac{n}{2}} s_{[m']} \sum_{p \in [m']} \widehat{f}_{m - p}(\hbar m').
	\end{align*}
	Therefore, by (\ref{Equation 4.10}),
	\begin{align*}
		\left\langle fs, \check{\sigma}_\hbar^{[m]} \right\rangle = 2^{-\frac{n}{2}} k^{\frac{n}{2}} \left\langle \rho, \check{\rho} \right\rangle \sum_{[m'] \in \mathbb{Z}_k^n} s_{[m']} \sum_{p \in [m']} \widehat{f}_{m - p}(\hbar m') = \sum_{m' \in \mathbb{Z}^n} \widehat{f}_{m - m'}(\hbar m') s_{[m']}.
	\end{align*}
\end{proof}

We provide some insights into formula (\ref{Equation 6.6}). As a toy model, consider $n = 1$, i.e. $X = \mathbb{R}^2 / \mathbb{Z}^2$ with periodic coordinates $(x, y)$. We take the universal covering $\tilde{X} = \mathbb{R}^2$ of $X$, on which $x, y$ are globally defined functions. When we take the real polarization spanned by $\tfrac{\partial}{\partial x}$, we expect to have the following assignments:
\begin{equation*}
x \mapsto \frac{\hbar}{2\pi \sqrt{-1}} \frac{\partial}{\partial y}; \quad y \mapsto y \cdot,
\end{equation*}
sending coordinates on $\tilde{X}$ to operators on the quantum Hilbert space $L^2(\mathbb{R})$. For $X$, there are two amendments. First, the quantum Hilbert space $\mathcal{H}^\hbar$ is now isomorphic to $l^2(\mathbb{Z}_k)$, where $\mathbb{Z}_k$ is embedded in $\mathbb{S}^1$ via $y + k\mathbb{Z} \mapsto e^{2\pi\sqrt{-1}\hbar y}$. Second, a correct analogue to $x, y$ are the globally defined functions $e^{2\pi \sqrt{-1}x}, e^{2\pi \sqrt{-1}y}$ on $X$ respectively. Under the identification $\mathcal{H}^\hbar \cong l^2(\mathbb{Z}_k)$, $Q^\hbar$ takes the following assignments
\begin{equation*}
e^{2\pi \sqrt{-1}x} \mapsto S_\hbar; \quad e^{2\pi \sqrt{-1}y} \mapsto e^{2\pi \sqrt{-1}y} \cdot,
\end{equation*}
where $S_\hbar$ is the shift operator on $l^2(\mathbb{Z}_k)$ by $\hbar$. Quantization on $X$ is therefore a discrete version of that on $\tilde{X}$.\par
We also see from (\ref{Equation 6.6}) that the Toeplitz operator $Q^\hbar$ for the pair of polarizations $(\mathcal{P}^{\check{T}}, \mathcal{P}^T)$ gives rise to a representation of a quantum torus (c.f. Appendix \ref{Appendix B}).

\subsection{Norm estimations}
\quad\par
Now we start the discussion on norm estimations. To facilitate the estimate of the operator norm $\lVert A \rVert$ for any operator $A$ on $\mathcal{H}^\hbar$ with respect to the inner product on $\mathcal{H}^\hbar$, we resort to the expedient of defining the norms
\begin{align*}
	\lVert s \rVert_1 = \sum_{m \in \mathbb{Z}_k^n} \lvert s_m \rvert \quad \text{and} \quad \lVert s \rVert_\infty = \sup_{m \in \mathbb{Z}_k^n} \lvert s_m \rvert,
\end{align*}
for any $s = \sum_{m \in \mathbb{Z}_k^n} s_m \sigma_\hbar^m \in \mathcal{H}^\hbar$, and denote by $\lVert A \rVert_1$ and $\lVert A \rVert_\infty$ the operator norm of $A$ with respect to $\lVert \quad \rVert_1$ and $\lVert \quad \rVert_\infty$ respectively. The key reason behind the introduction of these two ad hoc norms is the following variant of H\"older inequality (one might consult Chapter 5 in \cite{HJ2013}). 

\begin{lemma}
	\label{Lemma 6.5}
	Let $r \in \mathbb{N}$. For $1 \leq p \leq \infty$, let $\lVert \quad \rVert_p$ be the operator norm with respect to the $l^p$ norm on $\mathbb{C}^r$. Suppose $1 \leq p, q \leq \infty$ are H\"older conjugates, i.e. $\tfrac{1}{p} + \tfrac{1}{q} = 1$ (by convention, when $p = 1$, $q = \infty$). Then for any $r \times r$ complex matrix $A$,
	\begin{equation*}
		\lVert A \rVert_2 \leq \sqrt{\lVert A \rVert_p \lVert A \rVert_q}.
	\end{equation*}
\end{lemma}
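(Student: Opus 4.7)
The plan is to obtain this inequality as a direct application of the Riesz--Thorin interpolation theorem on the finite-dimensional $L^p$-spaces $(\mathbb{C}^r, \lVert \cdot \rVert_p)$. Concretely, regard $A$ as a linear operator $A: \mathbb{C}^r \to \mathbb{C}^r$ bounded as a map $(\mathbb{C}^r, \lVert \cdot \rVert_p) \to (\mathbb{C}^r, \lVert \cdot \rVert_p)$ with norm $\lVert A \rVert_p$ and as a map $(\mathbb{C}^r, \lVert \cdot \rVert_q) \to (\mathbb{C}^r, \lVert \cdot \rVert_q)$ with norm $\lVert A \rVert_q$.

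Next, I would apply Riesz--Thorin with interpolation parameters $p_0 = q_0 = p$, $p_1 = q_1 = q$ and interpolation weight $t = \tfrac{1}{2}$. The resulting intermediate exponent satisfies
\begin{equation*}
\frac{1}{p_t} = \frac{1-t}{p} + \frac{t}{q} = \frac{1}{2}\left( \frac{1}{p} + \frac{1}{q} \right) = \frac{1}{2},
\end{equation*}
so $p_t = q_t = 2$. The interpolation inequality then yields
\begin{equation*}
\lVert A \rVert_2 \leq \lVert A \rVert_p^{1/2} \lVert A \rVert_q^{1/2} = \sqrt{\lVert A \rVert_p \lVert A \rVert_q},
\end{equation*}
which is exactly the claim. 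The boundary cases where one of $p, q$ equals $1$ or $\infty$ are handled by the standard formulation of Riesz--Thorin on counting-measure spaces, which accommodates these endpoints without modification.

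There is essentially no obstacle here, as this is a textbook consequence of complex interpolation; the only small care needed is to verify that Riesz--Thorin applies in the finite-dimensional counting-measure setting with the endpoints $p \in \{1, \infty\}$, but this is standard (see, e.g., the treatment in \cite{HJ2013} referenced in the statement, where an elementary matrix-analytic proof via the Hadamard three-lines lemma can also be carried out directly, dispensing with any measure-theoretic machinery).
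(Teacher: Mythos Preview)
Your proof via Riesz--Thorin is correct, but the paper takes a different, more elementary route. The paper argues directly from two matrix-analytic facts: (i) $\lVert A \rVert_2 = \sqrt{\rho(A^*A)}$, and (ii) the spectral radius is dominated by any operator norm, together with the duality $\lVert A^* \rVert_p = \lVert A \rVert_q$. Chaining these gives $\lVert A \rVert_2 = \sqrt{\rho(A^*A)} \leq \sqrt{\lVert A^*A \rVert_p} \leq \sqrt{\lVert A^* \rVert_p \lVert A \rVert_p} = \sqrt{\lVert A \rVert_p \lVert A \rVert_q}$. This avoids any appeal to interpolation theory and is entirely self-contained at the level of linear algebra (and is indeed the flavor of argument one finds in \cite{HJ2013}). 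Your approach, by contrast, invokes a heavier theorem but yields more: Riesz--Thorin immediately gives the full interpolation scale $\lVert A \rVert_{p_t} \leq \lVert A \rVert_p^{1-t} \lVert A \rVert_q^{t}$ for all $t \in [0,1]$, of which the lemma is just the midpoint case. Either argument is perfectly adequate for the paper's purposes, since only the $p=1$, $q=\infty$ instance is ever used.
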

\begin{proof}
	Recall that the spectral radius $\rho(A)$ of an $r \times r$ complex matrix $A$ is always a lower bound of operator norms of $A$. In particular, $\lVert A \rVert_2 = \sqrt{\rho(A^*A)}$, where $A^*$ is the Hermitian transpose of $A$. An additional observation is that $\lVert A^* \rVert_p = \lVert A \rVert_q$.
	From this, it yields
	\begin{equation*}
	\lVert A \rVert_2 = \sqrt{\rho(A^*A)} \leq \sqrt{\lVert A^*A \rVert_p} \leq \sqrt{\lVert A \rVert_p \lVert A^* \rVert_p} = \sqrt{\lVert A \rVert_p \lVert A \rVert_q}.
	\end{equation*}
\end{proof}
	
\begin{proposition}
	For all $f \in \mathcal{C}^\infty(X, \mathbb{C})$, there exists $K(f) > 0$ such that
	\begin{align*}
		\left\lVert Q_f^\hbar \right\rVert \leq K(f)
	\end{align*}
	 for all $k \in \mathbb{N}$, where $\hbar = \tfrac{1}{k}$.
\end{proposition}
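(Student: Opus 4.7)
The plan is to use the explicit matrix representation of $Q_f^\hbar$ in the basis $\{\sigma_\hbar^{[m]}\}_{[m] \in \mathbb{Z}_k^n}$ coming from (\ref{Equation 6.6}), namely
\begin{align*}
A_{[m], [m']} := \sum_{\tilde{m}' \in [m']} \widehat{f}_{m - \tilde{m}'}(\hbar \tilde{m}'),
\end{align*}
and then to apply Lemma \ref{Lemma 6.5} to bound the operator norm $\lVert Q_f^\hbar \rVert$ by the geometric mean of $\lVert Q_f^\hbar \rVert_1$ and $\lVert Q_f^\hbar \rVert_\infty$. This reduces the problem to bounding the maximum row sum and the maximum column sum of the nonnegative matrix $(|A_{[m], [m']}|)$ by a constant depending only on $f$.

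For the row sum I would apply the triangle inequality to obtain $\sum_{[m'] \in \mathbb{Z}_k^n} |A_{[m], [m']}| \leq \sum_{m' \in \mathbb{Z}^n} |\widehat{f}_{m - m'}(\hbar m')|$ and then reindex by $p = m - m'$ so that the sum runs over $p \in \mathbb{Z}^n$ independently of $[m]$ and $k$. For the column sum I would fix representatives $m \in \{0, \ldots, k-1\}^n$ for each $[m]$ and observe that, for fixed $[m']$, the pairs $(m, \tilde{m}')$ with $\tilde{m}' \in [m']$ are in bijection with $q = m - \tilde{m}' \in \mathbb{Z}^n$; this again eliminates the $k$-dependence. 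To conclude I would invoke Lemma \ref{Lemma 5.3}(1) with $I = 0$ and any $r \geq 2$, yielding $|\widehat{f}_p(y)| \leq C_{f, r} |N_p^r|$ uniformly in $y \in \mathbb{R}^n$, and define
\begin{align*}
K(f) := C_{f, r} \sum_{p \in \mathbb{Z}^n} |N_p^r|,
\end{align*}
which is finite for $r \geq 2$. Lemma \ref{Lemma 6.5} then gives $\lVert Q_f^\hbar \rVert \leq \sqrt{\lVert Q_f^\hbar \rVert_1 \lVert Q_f^\hbar \rVert_\infty} \leq K(f)$.

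The main obstacle is that $\dim \mathcal{H}^\hbar = k^n$ grows without bound in $k$, so any naive estimate via the dimension or the Hilbert--Schmidt norm of $A$ would fail to produce a $k$-independent constant. The resolution rests on two features: the essentially translation-invariant, banded structure of $A$, which collapses both the row and column sums into a single sum over $\mathbb{Z}^n$ not involving $k$; and the rapid decay of the fibrewise Fourier coefficients of a smooth $f$, supplied by Lemma \ref{Lemma 5.3}, which renders that sum finite. Lemma \ref{Lemma 6.5} is the crucial tool that lets us bypass the Hermitian $l^2$ norm in favour of the more tractable $l^1$ and $l^\infty$ row/column-sum norms on which these two features can be exploited directly.
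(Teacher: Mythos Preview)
Your proposal is correct and follows essentially the same approach as the paper: both bound $\lVert Q_f^\hbar \rVert_1$ and $\lVert Q_f^\hbar \rVert_\infty$ by the same constant $K(f) = C_{f,r}\sum_{p\in\mathbb{Z}^n}|N_p^r|$ via the uniform decay from Lemma~\ref{Lemma 5.3} and then invoke Lemma~\ref{Lemma 6.5}. The only cosmetic difference is that you phrase the argument in terms of row/column sums of the matrix $(A_{[m],[m']})$, whereas the paper applies the triangle inequality directly to $\lVert Q_f^\hbar s\rVert_1$ and $\lVert Q_f^\hbar s\rVert_\infty$; the reindexing you describe (including the bijection $(m,\tilde m')\mapsto q=m-\tilde m'$ for the column sum) is exactly what underlies the paper's one-line estimates.
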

\begin{proof}
	By Lemma \ref{Lemma 5.3}, there exists $K(f) > 0$ such that for all map $\mathbb{Z}^n \to \mathbb{R}^n$, $m \mapsto y_m$, $\sum_{m \in \mathbb{Z}^n} \lvert \widehat{f}_m(y_m) \rvert \leq K(f)$. Let $k \in \mathbb{N}$ and $\hbar = \tfrac{1}{k}$. For any $s \in \mathcal{H}^\hbar$, writing $s = \sum_{[m] \in \mathbb{Z}_k^n} s_{[m]} \sigma_\hbar^{[m]}$,
	\begin{align*}
	\left\lVert Q_f^\hbar s \right\lVert_1 = & \sum_{[m] \in \mathbb{Z}_k^n} \left\lvert \sum_{m' \in \mathbb{Z}^n} \widehat{f}_{m-m'}(\hbar m') s_{[m']} \right\rvert
	\leq \sum_{[m'] \in \mathbb{Z}_k^n} \lvert s_{[m']} \rvert \sum_{m \in \mathbb{Z}^n} \lvert \widehat{f}_m (\hbar m') \lvert \leq K(f) \lVert s \rVert_1.\\
	\left\lVert Q_f^\hbar s \right\lVert_\infty = & \sup_{[m] \in \mathbb{Z}_k^n} \left\lvert \sum_{m' \in \mathbb{Z}^n} \widehat{f}_{m-m'}(\hbar m') s_{[m']} \right\rvert \leq \lVert s \rVert_\infty \sup_{[m] \in \mathbb{Z}_k^n} \sum_{m' \in \mathbb{Z}^n} \left\lvert \widehat{f}_{m-m'}(\hbar m') \right\rvert \leq K(f) \lVert s \rVert_\infty.
	\end{align*}
	Thus, $\lVert Q_f^\hbar \rVert_1 \leq K(f)$ and $\lVert Q_f^\hbar \rVert_\infty \leq K(f)$. By Lemma \ref{Lemma 6.5}, $\lVert Q_f^\hbar \rVert \leq \sqrt{\lVert Q_f^\hbar \rVert_1 \lVert Q_f^\hbar \rVert_\infty} \leq K(f)$.
\end{proof}

Therefore, the operator norm $\lVert Q_f^\hbar \rVert$ of $Q_f^\hbar$ has a uniform bound, independent of $\hbar$.\par
Before providing the proof of Theorem \ref{Theorem 1.1}, we give a heuristic argument for $X = \mathbb{R}^2/\mathbb{Z}^2$, demonstrating the idea. Suppose $p, q, m \in \mathbb{Z}$. Consider smooth functions
\begin{align*}
f(x, y) = \widehat{f}_p(y) e^{2\pi \sqrt{-1} px} \quad \text{and} \quad g(x, y) = \widehat{g}_q(y) e^{2\pi\sqrt{-1} qx}
\end{align*}
on $X$. We see from (\ref{Equation 3.3}) and (\ref{Equation 3.4}) that
\begin{align*}
(f \star g)(x, y) = \sum_{i=0}^\infty \frac{( \hbar q )^i}{i!} \widehat{f}_p^{(i)}(y) \widehat{g}_q(y) e^{2\pi \sqrt{-1}(p+q)x} = \widehat{f}_p(y + \hbar q) \widehat{g}_q(y) e^{2\pi \sqrt{-1}(p+q)x}
\end{align*}
formally. 
Then by (\ref{Equation 6.6}), formally,
\begin{align*}
Q_{f \star g}^\hbar \sigma_\hbar^{[m]} = \widehat{f}_p(\hbar(m +  q)) \widehat{g}_q(\hbar m) \sigma_\hbar^{[m+p+q]} = Q_f^\hbar \left( \widehat{g}_q(\hbar m) \sigma_\hbar^{[m+q]} \right) = (Q_f^\hbar \circ Q_g^\hbar) \sigma_\hbar^{[m]}.
\end{align*}
The above phenomenon is described by Theorem \ref{Theorem 1.1}. Recall that for $N \in \mathbb{N} \cup \{0\}$, $f, g \in \mathcal{C}^\infty(X, \mathbb{C})$ and $\hbar \in \mathbb{R}^+$, $f \star_N g = \sum_{i=0}^N \hbar^i C_i(f, g)$.

\begin{theorem}
	\label{Theorem 6.7}
	($=$ Theorem \ref{Theorem 1.1}) For all $f, g \in \mathcal{C}^\infty(X, \mathbb{C})$ and $N \in \mathbb{N} \cup \{0\}$, there exists $K = K_N(f, g) > 0$ such that
	\begin{align*}
	\left\lVert Q_f^\hbar \circ Q_g^\hbar - Q_{f \star_N g}^\hbar \right\rVert & \leq K \hbar^{N+1}
	\end{align*}
	for all $k \in \mathbb{N}$, where $\hbar = \tfrac{1}{k}$.
\end{theorem}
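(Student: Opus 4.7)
The plan is to compute the matrix entries of $Q_f^\hbar \circ Q_g^\hbar$ and $Q_{f \star_N g}^\hbar$ in the basis $\{\sigma_\hbar^{[m]}\}_{[m] \in \mathbb{Z}_k^n}$ via formula (\ref{Equation 6.6}), identify their difference as a Taylor remainder in the fibrewise Fourier coefficients of $f$, and close the estimate by invoking Lemmas \ref{Lemma 5.3} and \ref{Lemma 6.5}.

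First I would iterate (\ref{Equation 6.6}) on $s = \sum_{[m]} s_{[m]} \sigma_\hbar^{[m]}$: reindexing the nested sum by $p = m - \ell'$ and $q = \ell' - m'$ gives
\begin{align*}
(Q_f^\hbar \circ Q_g^\hbar\, s)_{[m]} = \sum_{p, q \in \mathbb{Z}^n} \widehat{f}_p\bigl(\hbar(m - p)\bigr) \, \widehat{g}_q\bigl(\hbar(m - p - q)\bigr) \, s_{[m - p - q]}.
\end{align*}
In parallel, substituting the fibrewise Fourier expansions of $f$ and $g$ into formula (\ref{Equation 3.4}) and using $\partial_{x^i}(e^{2\pi\sqrt{-1}q \cdot x}) = 2\pi\sqrt{-1} q_i e^{2\pi\sqrt{-1}q\cdot x}$ gives
\begin{align*}
\widehat{(f \star_N g)}_r(y) = \sum_{q \in \mathbb{Z}^n} \left( \sum_{k=0}^N \frac{(\hbar q \cdot \partial_y)^k}{k!} \widehat{f}_{r-q}(y) \right) \widehat{g}_q(y),
\end{align*}
whose bracketed factor is the degree-$N$ Taylor polynomial of $y' \mapsto \widehat{f}_{r-q}(y')$ about $y$ evaluated at $y + \hbar q$. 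Applying (\ref{Equation 6.6}) to $f \star_N g$ and relabelling $r = p + q$ puts $(Q_{f \star_N g}^\hbar\, s)_{[m]}$ in exactly the same form as above, but with $\widehat{f}_p(\hbar(m-p))$ replaced by the degree-$N$ Taylor polynomial of $\widehat{f}_p$ about $\hbar(m-p-q)$ evaluated at $\hbar(m-p-q) + \hbar q = \hbar(m-p)$.

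Subtracting, the $([m], [m-p-q])$-matrix entry of $Q_f^\hbar \circ Q_g^\hbar - Q_{f \star_N g}^\hbar$ is precisely the Lagrange remainder $R_N^{q, \hbar}\widehat{f}_p\bigl(\hbar(m-p-q)\bigr) \cdot \widehat{g}_q\bigl(\hbar(m-p-q)\bigr)$, which is bounded in absolute value by $\tfrac{\hbar^{N+1}}{(N+1)!} \sup_z \lvert (q \cdot \partial_y)^{N+1} \widehat{f}_p(z) \rvert \cdot \sup_z \lvert \widehat{g}_q(z) \rvert$. Expanding $(q \cdot \partial_y)^{N+1}$ via the multinomial theorem and applying Lemma \ref{Lemma 5.3}(2) to the partial derivatives of $\widehat{f}_p$ with an arbitrary exponent $r$, and Lemma \ref{Lemma 5.3}(1) to $\widehat{g}_q$ with an arbitrary exponent $r'$, yield an upper bound of the form $C \hbar^{N+1} \bigl( \max_i \lvert q_i \rvert \bigr)^{N+1} \lvert N_p^r \rvert \lvert N_q^{r'} \rvert$ with $C = C_{f, g, N, r, r'}$ independent of $k$.

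Choosing $r = 2$ and $r'$ large enough (e.g. $r' \geq N + 2$) renders the double series $\sum_{p, q} \bigl( \max_i \lvert q_i \rvert \bigr)^{N+1} \lvert N_p^r \rvert \lvert N_q^{r'} \rvert$ absolutely convergent. Swapping the summations and invoking the translation-invariance identities $\sum_{[m]} \lvert s_{[m-p-q]} \rvert = \lVert s \rVert_1$ and $\sup_{[m]} \lvert s_{[m-p-q]} \rvert = \lVert s \rVert_\infty$ decouples the $(p, q)$-sum from the $s$-norms, producing uniform bounds $\lVert Q_f^\hbar \circ Q_g^\hbar - Q_{f \star_N g}^\hbar \rVert_1, \lVert Q_f^\hbar \circ Q_g^\hbar - Q_{f \star_N g}^\hbar \rVert_\infty \leq K \hbar^{N+1}$ with $K = K_N(f, g)$ independent of $k$. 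Lemma \ref{Lemma 6.5} then upgrades these to the desired operator-norm estimate (\ref{Equation 1.1}). The main obstacle is the bookkeeping step of matching the two double sums after reindexing and recognising the matrix-entry difference cleanly as a Taylor remainder in $\widehat{f}_p$ with increment $\hbar q$; once that is in hand, Lemma \ref{Lemma 5.3} supplies convergence uniformly in $\hbar$ for any $N$.
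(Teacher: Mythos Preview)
Your proposal is correct and follows essentially the same route as the paper: compute both operators in the basis $\{\sigma_\hbar^{[m]}\}$ via (\ref{Equation 6.6}), recognize the difference as the Taylor remainder of $\widehat{f}_p$ with increment $\hbar q$, control it through Lemma~\ref{Lemma 5.3}, bound the $\lVert\cdot\rVert_1$ and $\lVert\cdot\rVert_\infty$ operator norms by decoupling the $(p,q)$-sum from $s$, and close with Lemma~\ref{Lemma 6.5}. The only notable difference is in how the polynomial factor $q^I$ from the remainder is handled: the paper invokes Lemma~\ref{Lemma 5.3}(1) with the full multi-index to get $\lvert q^I\widehat{g}_q(y)\rvert \leq C_{g,I}\lvert N_q^2\rvert$ directly (so both exponents stay equal to $2$), whereas you bound $\lvert q^I\rvert$ and $\lvert\widehat{g}_q\rvert$ separately and compensate with a large exponent $r'$ --- in which case your stated threshold $r'\geq N+2$ should read $r'\geq N+3$ for $\sum_q(\max_i\lvert q_i\rvert)^{N+1}\lvert N_q^{r'}\rvert$ to converge.
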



\begin{proof} 
	Fix $f, g \in \mathcal{C}^\infty(X, \mathbb{C}) \cong \mathcal{C}_{\operatorname{per}}^\infty(\mathbb{R}^{2n}, \mathbb{C})$ and $N \in \mathbb{N} \cup \{0\}$. We shall construct a bound $K$ by estimating the decay rate of the fibrewise Fourier coefficients of functions $f, g$ and their derivatives. For $m \in \mathbb{Z}^n$, define $N_m$ as in (\ref{Equation 5.2}). By Lemma \ref{Lemma 5.3}, for all multi-index $I$, there exist $C_{f, I}, C_{g, I} > 0$ such that for all $m \in \mathbb{Z}^n$ and $y \in \mathbb{R}^n$,
	\begin{align*}
	\left\lvert \widehat{f}_m^{(I)}(y) \right\vert \leq C_{f, I} \lvert N_m^2 \rvert \quad \text{and} \quad \left\lvert m^I \widehat{g}_m(y) \right\vert \leq C_{g, I} \lvert N_m^2 \rvert.
	\end{align*}
	We see that $S := \sum_{m \in \mathbb{Z}^n} \lvert N_m^2 \rvert < +\infty$. Define
	\begin{align*}
	K = S^2 \sum_{\lvert I \rvert = N+1} \frac{C_{f, I} C_{g, I}}{I!} > 0.
	\end{align*}
	Consider any $k \in \mathbb{N}$ and $\hbar = \tfrac{1}{k}$. Define the error term $E_N^\hbar := Q_f^\hbar \circ Q_g^\hbar - Q_{f \star_N g}^\hbar$. Fix a quantum state $s = \sum_{[m] \in \mathbb{Z}_k^n} s_{[m]} \sigma_\hbar^{[m]} \in \mathcal{H}^\hbar$. One the one hand, by (\ref{Equation 6.6}), we see that
	\begin{align*}
	Q_f^\hbar Q_g^\hbar s = & \sum_{[m] \in \mathbb{Z}_k^n} \left( \sum_{p, r \in \mathbb{Z}^n} \widehat{f}_{m-p}(\hbar p) \widehat{g}_{p-r}(\hbar r) s_{[r]} \right) \sigma_\hbar^{[m]}\\
	= & \sum_{[m] \in \mathbb{Z}_k^n} \left( \sum_{r, q \in \mathbb{Z}^n} \widehat{f}_{m-r-q}(\hbar (r+q)) \widehat{g}_q(\hbar r) s_{[r]} \right) \sigma_\hbar^{[m]}.
	\end{align*}
	On the other hand, for any $i \in \mathbb{N} \cup \{0\}$ and $r \in \mathbb{Z}^n$, the $r$th fibrewise Fourier coefficient of $C_i(f, g)$ is
	\begin{align*}
	\sum_{\lvert I \rvert = i} \frac{1}{I!} \sum_{q \in \mathbb{Z}^n} q^I \cdot \widehat{f}_{r-q}^{(I)}(y) \widehat{g}_q(y),
	\end{align*}
	and hence
	\begin{align*}
	Q_{C_i(f, g)}^\hbar s = & \sum_{[m] \in \mathbb{Z}_k^n} \left( \sum_{r, q \in \mathbb{Z}^n} \sum_{\lvert I \rvert = i} \frac{q^I}{I!} \widehat{f}_{m-r-q}^{(I)}(\hbar r) \widehat{g}_q(\hbar r) s_{[r]} \right) \sigma_\hbar^{[m]}.
	\end{align*}
	For all $m, r, q \in \mathbb{Z}^n$, define the remainder term
	\begin{align*}
	R_{m, r, q}^N := \widehat{f}_{m-r-q}(\hbar (r + q)) - \sum_{\lvert I \rvert \leq N} \frac{(\hbar q)^I}{I!} \widehat{f}_{m-r-q}^{(I)}(\hbar r),
	\end{align*}
	which can be expressed in integral form by Taylor's Theorem:
	\begin{align*}
	R_{m, r, q}^N = (N+1) \sum_{\lvert I \rvert = N+1} \frac{\hbar^{N+1} q^I}{I!} \int_0^1 (1 - t)^N \widehat{f}_{m-r-q}^{(I)}(\hbar (r + t q)) dt,
	\end{align*}
	and we can furthermore see that
	\begin{align*}
	\lvert R_{m, r, q}^N \rvert \leq \hbar^{N+1} \lvert N_{m-r-q}^2 \rvert \sum_{\lvert I \rvert = N+1} \frac{C_{f, I} \lvert q^I \rvert}{I!}.
	\end{align*}
	Therefore, the error term becomes
	\begin{align*}
		E_N^\hbar(s) = \sum_{[m] \in \mathbb{Z}_k^n} \left( \sum_{r, q \in \mathbb{Z}^n} R_{m, r, q}^N \widehat{g}_q(\hbar r) s_{[r]} \right) \sigma_\hbar^{[m]}.
	\end{align*}
	Then we have
	\begin{align*}
	\lVert E_N^\hbar(s) \rVert_1 
	\leq & \hbar^{N+1} \sum_{[m] \in \mathbb{Z}_k^n} \sum_{r, q \in \mathbb{Z}^n} \lvert s_{[r]} \rvert \lvert N_{m-r-q}^2 \rvert \lvert N_q^2 \rvert \sum_{\lvert I \rvert = N+1} \frac{C_{f, I} C_{g, I}}{I!}
	= K \hbar^{N+1} \lVert s \rVert_1,\\
	\lVert E_N^\hbar(s) \rVert_\infty 
	\leq & \hbar^{N+1} \lVert s \rVert_\infty \sup_{[m] \in \mathbb{Z}_k^n} \sum_{q \in \mathbb{Z}^n} \sum_{r \in \mathbb{Z}^n} \lvert N_{m-r-q}^2 \rvert \lvert N_q^2 \rvert \sum_{\lvert I \rvert = N+1} \frac{C_{f, I} C_{g, I}}{I!}
	= K \hbar^{N+1} \lVert s \rVert_\infty.
	\end{align*}
	Thus, $\lVert E_N^\hbar \rVert_1 \leq K \hbar^{N+1}$ and $\lVert E_N^\hbar \rVert_\infty \leq K \hbar^{N+1}$. By Lemma \ref{Lemma 6.5}, $\lVert E_N^\hbar \rVert \leq K \hbar^{N+1}$.
\end{proof}

This verifies that as $\hbar \to 0^+$, $Q^\hbar$ gives an asymptotic representation of the star product $\star$:
\begin{align*}
	Q_f^\hbar \circ Q_g^\hbar \approx Q_{f \star g}^\hbar.
\end{align*}
\par
It is not hard to swap the roles of the two polorizations $\mathcal{P}^T, \mathcal{P}^{\check{T}}$ and construct similarly the \emph{Toeplitz operator for the pair} $(\mathcal{P}^T, \mathcal{P}^{\check{T}})$:
\begin{align*}
	\check{Q}^\hbar \times \check{\mathcal{H}}^\hbar \to \check{\mathcal{H}}^\hbar, \quad (f, s) \mapsto \check{Q}_f^\hbar s.
\end{align*}
One can verify that for all $f \in \mathcal{C}^\infty(X, \mathbb{C})$ and $s = \sum_{[m] \in \mathbb{Z}_k^n} s_{[m]} \check{\sigma}_\hbar^{[m]} \in \check{\mathcal{H}}^\hbar$,
\begin{equation}
\label{Equation 6.7}
\check{Q}_f^\hbar s = \sum_{[m] \in \mathbb{Z}_k^n} \left( \sum_{m' \in \mathbb{Z}^n} \widehat{f}_{m - m'}(\hbar m) s_{[m']} \right) \check{\sigma}_\hbar^{[m]}.
\end{equation}
For deformation quantization, we know from Proposition \ref{Proposition 3.3} that the two star products $\star, \check{\star}$ are related by an isomorphism $\mathcal{F}^{\operatorname{DQ}} := e^{-\hbar \Delta}$; for geometric quantization, we know from Equation (29) in \cite{BMN2010} that the quantum Hilbert spaces $\mathcal{H}^\hbar, \check{\mathcal{H}}^\hbar$ are related by the unitary BKS pairing maps
\begin{equation}
\label{Equation 6.8}
B^\hbar: \check{\mathcal{H}}^\hbar \to \mathcal{H}^\hbar, \quad B^\hbar(\check{\sigma}_\hbar^m) = \sigma_\hbar^m, \quad \text{for any } m \in \mathbb{Z}_k^n,
\end{equation}
which is essentially the discrete Fourier transform, and the formulae (\ref{Equation 6.6}) and (\ref{Equation 6.7}) match the shift theorem for discrete Fourier transform. Let $\mathcal{F}^{\operatorname{GQ}}: \operatorname{End}_\mathbb{C} \check{\mathcal{H}}^\hbar \to \operatorname{End}_\mathbb{C} \mathcal{H}^\hbar$ be the conjugation by $B^\hbar$.\par
We now give a heuristic argument of the commutativity of the diagram (\ref{Equation 1.2}) on the toy model $X = \mathbb{R}^2 / \mathbb{Z}^2$. In this case, $\Delta = \tfrac{\sqrt{-1}}{2\pi} \frac{\partial^2}{\partial x \partial y}$. Suppose $f \in \mathcal{C}^\infty(X, \mathbb{C})$, formally,
\begin{align*}
(\mathcal{F}^{\operatorname{DQ}} f)(x, y) = \sum_{m \in \mathbb{Z}} \sum_{i=0}^\infty \frac{(\hbar m)^i}{i!}\widehat{f}_m^{(i)}(y) e^{2\pi \sqrt{-1} mx} \approx \sum_{m \in \mathbb{Z}} \widehat{f}_m(y + \hbar m) e^{2\pi \sqrt{-1} mx},
\end{align*}
spotted again that for all $m \in \mathbb{Z}$, the term $\sum_{i=0}^\infty \frac{(\hbar m)^i}{i!}\widehat{f}_m^{(i)}(y)$ is the Taylor series of $\widehat{f}_m(y + \hbar m)$. Then for any $m' \in \mathbb{Z}$,
\begin{align*}
Q_{\mathcal{F}^{\operatorname{DQ}} f}^\hbar \sigma_\hbar^{[m']} \approx & \sum_{[m] \in \mathbb{Z}_k} \sum_{p \in [m']} \widehat{f}_{m-p}(\hbar p + \hbar (m-p)) \sigma_\hbar^{[m]}\\
= & \sum_{[m] \in \mathbb{Z}_k} \sum_{p \in [m']} \widehat{f}_{m-p}(\hbar m) \sigma_\hbar^{[m]} = B^\hbar \check{Q}_f^\hbar (B^\hbar)^{-1}(\sigma_\hbar^{[m']}).
\end{align*}
Now for all $N \in \mathbb{N} \cup \{0\}$, we define $\mathcal{F}_N^{\operatorname{DQ}} = \sum_{i=0}^N \frac{1}{i!} (-\hbar \Delta)^i$ to be the $N$th order truncation of $\mathcal{F}^{\operatorname{DQ}}$.

\begin{theorem}
	($=$ Theorem \ref{Theorem 1.2}) For all $f \in \mathcal{C}^\infty(X, \mathbb{C})$ and $N \in \mathbb{N} \cup \{0\}$, there exists $K > 0$ such that
	\begin{align*}
	\lVert (\mathcal{F}^{\operatorname{GQ}} \circ \check{Q}^\hbar)(f) - (Q^\hbar \circ \mathcal{F}_N^{\operatorname{DQ}})(f) \rVert \leq K \hbar^{N+1}
	\end{align*}
	for all $k \in \mathbb{N}$, where $\hbar = \tfrac{1}{k}$.
\end{theorem}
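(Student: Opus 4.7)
The plan is to evaluate both operators in (\ref{Equation 1.3}) on the Bohr-Sommerfeld basis $\{\sigma_\hbar^{[m_0]}\}_{[m_0] \in \mathbb{Z}_k^n}$ of $\mathcal{H}^\hbar$, observe that the difference of the resulting coefficients is a multivariable Taylor remainder in the fibrewise Fourier coefficients of $f$, and then run the $\lVert \cdot \rVert_1$/$\lVert \cdot \rVert_\infty$ bookkeeping of the proof of Theorem \ref{Theorem 6.7}, combined with Lemma \ref{Lemma 6.5} and the uniform decay supplied by Lemma \ref{Lemma 5.3}.

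The key preliminary step is to translate $\mathcal{F}_N^{\operatorname{DQ}} = \sum_{i=0}^N \tfrac{1}{i!}(-\hbar\Delta)^i$ into fibrewise Fourier language. Because $\Delta = \tfrac{\sqrt{-1}}{2\pi}\sum_i \partial_{x^i}\partial_{y^i}$ acts on a single Fourier mode $\widehat{f}_m(y)e^{2\pi\sqrt{-1} m \cdot x}$ by $-m \cdot \partial_y$, one gets
\[
\bigl(\mathcal{F}_N^{\operatorname{DQ}} f\bigr)^{\wedge}_m(y) = \sum_{i=0}^N \frac{\hbar^i}{i!}(m\cdot\partial_y)^i \widehat{f}_m(y) = \sum_{|I|\le N}\frac{(\hbar m)^I}{I!}\widehat{f}_m^{(I)}(y),
\]
which is precisely the $N$-th Taylor polynomial of $h \mapsto \widehat{f}_m(y+h)$ about $h = 0$, evaluated at $h = \hbar m$. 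Substituting this into (\ref{Equation 6.6}) gives
\[
Q_{\mathcal{F}_N^{\operatorname{DQ}} f}^\hbar \sigma_\hbar^{[m_0]} = \sum_{[m]\in\mathbb{Z}_k^n}\Bigl(\sum_{r\in[m_0]}\sum_{|I|\le N}\frac{(\hbar(m-r))^I}{I!}\widehat{f}_{m-r}^{(I)}(\hbar r)\Bigr)\sigma_\hbar^{[m]},
\]
while by (\ref{Equation 6.7}) and $B^\hbar \check{\sigma}_\hbar^m = \sigma_\hbar^m$,
\[
(\mathcal{F}^{\operatorname{GQ}}\circ\check{Q}^\hbar)(f)\bigl(\sigma_\hbar^{[m_0]}\bigr) = \sum_{[m]\in\mathbb{Z}_k^n}\Bigl(\sum_{r\in[m_0]}\widehat{f}_{m-r}(\hbar m)\Bigr)\sigma_\hbar^{[m]}.
\]
Since $\hbar m = \hbar r + \hbar(m - r)$, the $[m]$-coefficient of $E^\hbar_N(\sigma_\hbar^{[m_0]})$, with error operator $E^\hbar_N := \mathcal{F}^{\operatorname{GQ}}(\check{Q}^\hbar_f) - Q^\hbar_{\mathcal{F}_N^{\operatorname{DQ}} f}$, is exactly $\sum_{r\in[m_0]} R^N_{m,r}$ where
\[
R^N_{m,r} := \widehat{f}_{m-r}(\hbar m) - \sum_{|I|\le N}\frac{(\hbar(m-r))^I}{I!}\widehat{f}_{m-r}^{(I)}(\hbar r)
\]
is the $N$-th Taylor remainder of $\widehat{f}_{m-r}$ about $\hbar r$ evaluated at $\hbar m$.

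Finally, Taylor's theorem with integral remainder gives
\[
R^N_{m,r} = (N+1)\sum_{|I|=N+1}\frac{(\hbar(m-r))^I}{I!}\int_0^1 (1-t)^N \widehat{f}_{m-r}^{(I)}\bigl(\hbar r + t\hbar(m-r)\bigr)\,dt,
\]
and Lemma \ref{Lemma 5.3} applied to $\partial_y^I f \in \mathcal{C}^\infty(X, \mathbb{C})$ supplies constants $C_{f,I} > 0$ with $|(m-r)^I \widehat{f}_{m-r}^{(I)}(y)| \le C_{f, I}|N_{m-r}^2|$ uniformly in $y$. Setting $S := \sum_{p\in\mathbb{Z}^n}|N_p^2| < \infty$ and $K := S \sum_{|I|=N+1} C_{f, I}/I!$, a bijective relabelling $(m, r) \mapsto p = m - r$ (with $m$ ranging over a chosen set of representatives of $\mathbb{Z}_k^n$ and $r \in [m_0]$) converts $\sum_{[m]}\sum_{r\in[m_0]}|N_{m-r}^2|$ into $\sum_{p\in\mathbb{Z}^n}|N_p^2| = S$, and the same double-sum estimates as in Theorem \ref{Theorem 6.7} yield $\lVert E^\hbar_N\rVert_1, \lVert E^\hbar_N\rVert_\infty \le K\hbar^{N+1}$, whence Lemma \ref{Lemma 6.5} gives $\lVert E^\hbar_N\rVert \le K\hbar^{N+1}$. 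The only genuinely new input compared to Theorem \ref{Theorem 6.7} is the identification of $(\mathcal{F}_N^{\operatorname{DQ}} f)^\wedge_m$ as a Taylor polynomial of $\widehat{f}_m(\cdot + \hbar m)$, which is the main conceptual obstacle; once that is in place, the problem reduces to a single Taylor-remainder bound of exactly the type already handled.
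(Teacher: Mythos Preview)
Your proof is correct and follows essentially the same approach as the paper's own proof: compute both operators in the Bohr--Sommerfeld basis via (\ref{Equation 6.6}) and (\ref{Equation 6.7}), recognise the difference as a Taylor remainder in the fibrewise Fourier coefficients $\widehat{f}_p$, bound it using Lemma \ref{Lemma 5.3} (applied to $\partial_y^I f$, as you correctly make explicit), and conclude via the $\lVert\cdot\rVert_1$/$\lVert\cdot\rVert_\infty$ estimates and Lemma \ref{Lemma 6.5}. The only cosmetic difference is your indexing convention (summing over $r\in[m_0]$ with Fourier mode $m-r$, versus the paper's sum over $r\in[m-m']$ with $r$ itself as the mode), which is an equivalent relabelling.
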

\begin{proof}
	Fix $f \in \mathcal{C}^\infty(X, \mathbb{C}) \cong \mathcal{C}_{\operatorname{per}}^\infty(\mathbb{R}^{2n}, \mathbb{C})$ and $N \in \mathbb{N} \cup \{0\}$. For $m \in \mathbb{Z}^n$, define $N_m$ as in (\ref{Equation 5.2}). By Lemma \ref{Lemma 5.3}, for all multi-index $I$, there exists $C_{f, I} > 0$ such that for all $m \in \mathbb{Z}^n$ and $y \in \mathbb{R}^n$,
	\begin{align*}
		\left\lvert m^I \widehat{f}_m(y) \right\vert \leq C_{f, I} \lvert N_m^2 \rvert.
	\end{align*}
	Note again that $S := \sum_{m \in \mathbb{Z}^n} \lvert N_m^2 \rvert < +\infty$. Define $K = S \sum_{\lvert I \rvert = N+1} \frac{C_{f, I}}{I!} > 0$. Consider any $k \in \mathbb{N}$. Let $\hbar = \tfrac{1}{k}$. Define the error term $E_N^\hbar = (\mathcal{F}^{\operatorname{GQ}} \circ \check{Q}^\hbar)(f) - (Q^\hbar \circ \mathcal{F}_N^{\operatorname{DQ}})(f)$. Fix a quantum state $s = \sum_{[m] \in \mathbb{Z}_k^n} s_{[m]} \sigma_\hbar^{[m]} \in \mathcal{H}^\hbar$. One the one hand,
	\begin{align*}
	(\mathcal{F}^{\operatorname{GQ}} \circ \check{Q}^\hbar)(f)(s) = \sum_{[m] \in \mathbb{Z}_k^n} \left( \sum_{m' \in \mathbb{Z}^n} \widehat{f}_{m-m'}(\hbar m) s_{[m']} \right) \sigma_\hbar^{[m]} = \sum_{[m], [m'] \in \mathbb{Z}_k^n} \sum_{r \in [m - m']} \widehat{f}_r(\hbar m) s_{[m']} \sigma_\hbar^{[m]}.
	\end{align*}
	On the other hand, for any $m \in \mathbb{Z}^n$, the $m$th fibrewise Fourier coefficient of $\mathcal{F}_N^{\operatorname{DQ}} f$ is
	\begin{align*}
	\sum_{\lvert I \rvert \leq N} \frac{\hbar^{\lvert I \rvert} m^I}{I!}\widehat{f}_m^{(I)}(y),
	\end{align*}
	and hence
	\begin{align*}
	(Q^\hbar \circ \mathcal{F}_N^{\operatorname{DQ}})(f)(s) = & \sum_{[m], [m'] \in \mathbb{Z}_k^n} \sum_{r \in [m - m']} \sum_{\lvert I \rvert \leq N} \frac{\hbar^{\lvert I \rvert} r^I}{I!}\widehat{f}_r^{(I)}(\hbar (m-r)) s_{[m']} \sigma_\hbar^{[m]}.
	\end{align*}
	For all $m, r \in \mathbb{Z}^n$, define the remainder term
	\begin{align*}
	R_{m, r}^N := \widehat{f}_r(\hbar m) - \sum_{\lvert I \rvert \leq N} \frac{\hbar^{\lvert I \rvert} r^I}{I!}\widehat{f}_r^{(I)}(\hbar (m-r)),
	\end{align*}
	which can be expressed in integral form by Taylor's Theorem:
	\begin{align*}
	R_{m, r}^N = (N+1) \sum_{\lvert I \rvert = N+1} \frac{\hbar^{N+1} r^I}{I!} \int_0^1 (1 - t)^N \widehat{f}_r^{(I)}(\hbar (m - (1 - t) r)) dt,
	\end{align*}
	and we can furthermore see that
	\begin{align*}
	\lvert R_{m, r}^N \rvert \leq \hbar^{N+1} \lvert N_r^2 \rvert \sum_{\lvert I \rvert = N+1} \frac{C_{f, I}}{I!}.
	\end{align*}
	Therefore, the error term becomes $E_N^\hbar(s) = \sum_{[m], [m'] \in \mathbb{Z}_k^n} \sum_{r \in [m-m']} R_{m, r}^N s_{[m']} \sigma_\hbar^{[m]}$. Then
	\begin{align*}
		\lVert E_N^\hbar (s) \rVert_1 
		\leq & \hbar^{N+1} \sum_{[m'] \in \mathbb{Z}_k^n} \left\lvert s_{[m']} \right\rvert \sum_{[m] \in \mathbb{Z}_k^n} \sum_{r \in [m-m']} \lvert N_r^2 \rvert \sum_{\lvert I \rvert = N+1} \frac{C_{f, I}}{I!} = K \hbar^{N+1} \lVert s \rVert_1,\\
		\lVert E_N^\hbar (s) \rVert_\infty 
		\leq & \hbar^{N+1} \lVert s \rVert_\infty \sup_{[m] \in \mathbb{Z}_k^n} \sum_{[m'] \in \mathbb{Z}_k^n} \sum_{r \in [m-m']} \lvert N_r^2 \rvert \sum_{\lvert I \rvert = N+1} \frac{C_{f, I}}{I!} = K \hbar^{N+1} \lVert s \rVert_\infty.
	\end{align*}
	Thus, $\lVert E_N^\hbar \rVert_1 \leq K \hbar^{N+1}$ and $\lVert E_N^\hbar \rVert_\infty \leq K \hbar^{N+1}$. By Lemma \ref{Lemma 6.5}, $\lVert E_N^\hbar \rVert \leq K \hbar^{N+1}$.
\end{proof}

Finally, we compare traces of Toeplitz operators for the pair $(\mathcal{P}^{\check{T}}, \mathcal{P}^T)$ with the trace defined for deformation quantization with separation of variables in $(\mathcal{P}^{\check{T}}, \mathcal{P}^T)$.

\begin{theorem}
	($=$ Theorem \ref{Theorem 1.3}) Let $\operatorname{Tr}$ be given as in (\ref{Equation 3.7}) and $f \in \mathcal{C}^\infty(X, \mathbb{C})$. Then
	\begin{equation*}
	\operatorname{tr}( Q_f^\hbar ) = \operatorname{Tr}(f) + \hbar^{-n} \operatorname{O}(\hbar^\infty).
	\end{equation*}
\end{theorem}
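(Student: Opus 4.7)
The plan is to compute $\operatorname{tr}(Q_f^\hbar)$ directly from the matrix representation of $Q_f^\hbar$ in the basis $\{\sigma_\hbar^{[m]}\}_{[m] \in \mathbb{Z}_k^n}$ of $\mathcal{H}^\hbar$ given by formula (\ref{Equation 6.6}), and then to recognize the result as a Riemann-sum approximation to the integral $\operatorname{Tr}(f)$, with the discrepancy controlled by the rapid decay of fibrewise Fourier coefficients (Lemma~\ref{Lemma 5.3}). Setting $s = \sigma_\hbar^{[m]}$ in (\ref{Equation 6.6}), the diagonal entry of $Q_f^\hbar$ at $[m]$ is $\sum_{m' \in \mathbb{Z}^n,\ [m'] = [m]} \widehat{f}_{m - m'}(\hbar m')$; writing $m' = m + k\lambda$ with $\lambda \in \mathbb{Z}^n$ and using $\mathbb{Z}^n$-periodicity of each $\widehat{f}_p$ in $y$, this becomes $\sum_{\lambda \in \mathbb{Z}^n} \widehat{f}_{-k\lambda}(\hbar m)$. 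Since the trace is basis-independent, summing over $[m]$ and separating $\lambda = 0$ from $\lambda \neq 0$ yields
\begin{equation*}
\operatorname{tr}(Q_f^\hbar) \;=\; \sum_{[m] \in \mathbb{Z}_k^n} \widehat{f}_0(\hbar m) \;+\; \sum_{\lambda \in \mathbb{Z}^n \setminus \{0\}} \sum_{[m] \in \mathbb{Z}_k^n} \widehat{f}_{-k\lambda}(\hbar m).
\end{equation*}

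For the main term ($\lambda = 0$), only the top-degree piece of $e^{\omega / \hbar}$ contributes to $\operatorname{Tr}(f)$, giving $\operatorname{Tr}(f) = \hbar^{-n} \int_X f\, \omega^n / n! = k^n \int_{[0, 1]^n} \widehat{f}_0(y)\, d^n y$. Then Lemma~\ref{Lemma 5.2}(2), applied to the smooth periodic function $\widehat{f}_0 \in \mathcal{C}^\infty(\check{T}, \mathbb{C})$, shows that the Riemann sum $k^{-n} \sum_{[m]} \widehat{f}_0(\hbar m)$ approximates $\int_{[0,1]^n} \widehat{f}_0$ to any polynomial order in $\hbar = 1/k$, whence $\sum_{[m]} \widehat{f}_0(\hbar m) = \operatorname{Tr}(f) + \hbar^{-n} \operatorname{O}(\hbar^\infty)$.

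For the wrap-around $\lambda \neq 0$ terms, Lemma~\ref{Lemma 5.3} supplies $\lvert \widehat{f}_p(y) \rvert \leq C_{f, r} \lvert N_p^r \rvert$ for every $r \in \mathbb{N}$; setting $j = \#\{i : \lambda_i \neq 0\} \geq 1$ gives $\lvert N_{k\lambda}^r \rvert = k^{-rj} \prod_{i : \lambda_i \neq 0} (2\pi \lvert \lambda_i \rvert)^{-r}$. Summing over $[m] \in \mathbb{Z}_k^n$ costs a factor $k^n$, and then summing over $\lambda \neq 0$ with fixed support size $j$ contributes $\binom{n}{j}(2\zeta(r))^j$ as in the proof of Lemma~\ref{Lemma 5.2}(2); totalling over $j = 1, \ldots, n$ produces a bound $\operatorname{O}(k^{n - r})$ for every $r$, hence $\operatorname{O}(\hbar^\infty)$. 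Combining with the previous step gives $\operatorname{tr}(Q_f^\hbar) = \operatorname{Tr}(f) + \hbar^{-n} \operatorname{O}(\hbar^\infty)$, and that $\operatorname{Tr}$ is a trace map for $\star$ is already established by the proposition at the end of Section~\ref{Section 3}, so its asymptotic expansion inherits the property. The only substantive point is balancing the factor $k^n$ from summing over $[m]$ against the Fourier-decay factor $k^{-rj}$, which works precisely because $j \geq 1$ whenever $\lambda \neq 0$ and $r$ is arbitrary; this is why the finite Bohr-Sommerfeld lattice $\mathbb{Z}_k^n$ does not obstruct the rapid convergence to the semiclassical limit.
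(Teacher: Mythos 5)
Your argument is correct and is essentially the paper's own proof: both compute $\operatorname{tr}(Q_f^\hbar)=\sum_{[m]\in\mathbb{Z}_k^n}\sum_{p\in\mathbb{Z}^n}\widehat{f}_{kp}(\hbar m)$ from (\ref{Equation 6.6}), treat the $p=0$ term as a Riemann sum handled by Lemma \ref{Lemma 5.2}(2), and kill the $p\neq 0$ terms by the rapid decay of Lemma \ref{Lemma 5.3} with the same zeta-function counting (your constant $(2\zeta(r))^j$ versus the paper's $\bigl(\zeta(r)/(2\pi)^r\bigr)^j$ is immaterial bookkeeping). No gaps; nothing further is needed.
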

\begin{proof}
	It directly follows from (\ref{Equation 6.6}) that if $k \in \mathbb{N}$ and $\hbar = \tfrac{1}{k}$, then
	\begin{align*}
	\operatorname{tr}( Q_f^\hbar ) = \sum_{[m] \in \mathbb{Z}_k^n} \sum_{p \in \mathbb{Z}^n} \widehat{f}_{kp}(\hbar m).
	\end{align*}
	Applying Lemma \ref{Lemma 5.2} on $\widehat{f}_0$, we know that for all $r \in \mathbb{N}$, there exists $C_{r, 1} > 0$ such that for all $k \in \mathbb{N}$ with $\hbar = \tfrac{1}{k}$,
	\begin{align*}
	\left\lvert \sum_{[m] \in \mathbb{Z}_k^n} \widehat{f}_0(\hbar m) - \operatorname{Tr}(f) \right\rvert \leq C_{r, 1} \hbar^{r-n}.
	\end{align*}
	Hence, it suffices to prove the claim that for all sufficiently large $r \in \mathbb{N}$, there exists $C_{r, 2} > 0$ such that for all $k \in \mathbb{N}$ with $\hbar = \tfrac{1}{k}$,
	\begin{align*}
	\left\lvert \sum_{[m] \in \mathbb{Z}_k^n} \sum_{p \in \mathbb{Z}^n \backslash Z_0} \widehat{f}_{kp}(\hbar m) \right\rvert \leq C_{r, 2} \hbar^{r-n},
	\end{align*}
	where $Z_i$ is the set of $p \in \mathbb{Z}^n$ such that the number of non-zero $p_j$'s ($j \in \{1, ..., n\}$) is $i$ for each $i \in \{0, ..., n\}$. The idea of proof of this claim is more or less the same as that of Lemma \ref{Lemma 5.2}. We only need to notice that for $r \in \mathbb{N}$ with $r \geq 2$, we shall use Lemma \ref{Lemma 5.3} to show the existence of a uniform bound $C_{r, 2} > 0$ such that for all $p \in \mathbb{Z}^n$ and $y \in \mathbb{R}^n$, $\lvert \widehat{f}_p(y) \rvert \leq C_{r, 2} \lvert N_p^r \rvert$, where $N_p$ is defined as in (\ref{Equation 5.2}). Then similarly, for all $k \in \mathbb{N}$, letting $\hbar = \tfrac{1}{k}$,
	\begin{align*}
	\left\lvert \sum_{[m] \in \mathbb{Z}_k^n} \sum_{p \in \mathbb{Z}^n \backslash Z_0} \widehat{f}_{kp}(\hbar m) \right\rvert 
	\leq k^n C_{r, 2} \sum_{i=1}^n \sum_{p \in Z_i} \lvert N_{kp}^r \rvert = C_{r, 2} \sum_{i=1}^n \hbar^{ri-n} \binom{n}{i} \left( \frac{\zeta(r)}{(2\pi)^r} \right)^i,
	\end{align*}
	where $\zeta(r)$ is the Riemann zeta function.
\end{proof}

In other words, as $\hbar \to 0^+$, the asymptotic expansion of $\operatorname{tr}(Q_f^\hbar)$ is exactly $\operatorname{Tr}(f)$.

\appendix
\section{Compact Symplectic Manifolds with Transversal Real Polarizations} 
\label{Appendix A}
Since in this paper we explore the possibility of constructing Toeplitz-type operators in real polarizations, we would like to consider a compact connected symplectic manifold $(X, \omega)$ with two transversal (non-singular) real polarizations $\mathcal{P}^T, \mathcal{P}^{\check{T}}$ such that the leaf spaces $T, \check{T}$ for $\mathcal{P}^T, \mathcal{P}^{\check{T}}$ respectively are smooth manifolds (without boundary) and the quotient maps $\mu: X \to T$ and $\check{\mu}: X \to \check{T}$ are Lagrangian fibrations. \par
As $X$ is connected, so are $T, \check{T}$ and $B := T \times \check{T}$. 
We also see that $\pi := \mu \times \check{\mu}: X \to B$ is a local diffeomorphism. 
By compactness of $X$ and connectedness of $B$, $X$ is a finite covering space of $B$. 
In particular, for all $b = (t, \check{t}) \in B$, the fibre $F_b := \pi^{-1}(\{b\})$ is the intersection $\mu^{-1}(\{t\}) \cap \check{\mu}^{-1}(\{\check{t}\})$ of the fibres of $\mu, \check{\mu}$ over $t, \check{t}$ respectively and the cardinality of $F_b$ is independent of $b$.\par
We furthermore focus on the case when $F_b$ is a singleton set for some $b \in B$. Then $\pi: X \to B$ is a diffeomorphism and clearly we have Lagrangian sections of $\mu$ and $\check{\mu}$. Therefore, $T \overset{\mu}{\leftarrow} X \overset{\check{\mu}}{\rightarrow} \check{T}$ is a twin Lagrangian fibration of index $n$ in the sense of Definition 2 in \cite{LY2007}, and we can argue that $T, \check{T}$ are affine tori of the form $T = V/\Lambda$ and $\check{T} = V^*/\Lambda'$ for some $n$-dimensional real vector space $V$ and lattices $\Lambda$ of $V$ and $\Lambda'$ of $V^*$. Thus, $X = T \times \check{T} = (V \oplus V^*)/(\Lambda \oplus \Lambda')$. Note that in general, $\Lambda'$ may not be the dual lattice of $\Lambda$.

\section{Representation of Quantum Tori induced by Toeplitz Operators}
\label{Appendix B}
In this appendix, we shall recall the definition of a quantum torus and show that our construction of Toeplitz-type operators induces representations of certain quantum tori.

\begin{definition}
	Let $q = (q_{ij})$ be an $n \times n$ complex matrix such that for all $i, j \in \{ 1, ..., n \}$, $q_{ij} = 1$ if $i = j$ and $q_{ij}q_{ji} = 1$ if $i \neq j$. The \emph{quantum torus}, is the $\mathbb{C}$-algebra generated over $\mathbb{C}$ by the variables $u_1, ..., u_n$ and their inverses, subject to the relations
	\begin{align*}
	u_iu_j = q_{ij}u_ju_i, \quad \text{for any } i, j \in \{ 1, ..., n \}.
	\end{align*}
\end{definition}

\begin{corollary}
	\label{Corollary 6.4}
	For all $i \in \{1, ..., n\}$, define $U_i^\hbar = Q_{f_i}^\hbar$ and $V_i^\hbar = Q_{g_i}^\hbar$, where 
	\begin{align*}
	f_i(x, y) = e^{2\pi \sqrt{-1} x^i} \quad \text{and} \quad g_i(x, y) = e^{2\pi \sqrt{-1} y^i}.
	\end{align*}
	Then for all $i, j \in \{1, ..., n\}$,
	\begin{align*}
	U_i^\hbar \circ U_j^\hbar = U_j^\hbar \circ U_i^\hbar, \quad V_i^\hbar \circ V_j^\hbar = V_j^\hbar \circ V_i^\hbar, \quad \text{and} \quad U_i^\hbar \circ V_j^\hbar = e^{2\pi \sqrt{-1} \hbar \delta_{i, j}} V_j^\hbar \circ U_i^\hbar.
	\end{align*}
	In particular, this gives a representation of the quantum torus with the $2n \times 2n$ matrix $(q_{ij})$ in $\mathcal{H}^\hbar$, where $q_{ij} = e^{2\pi\sqrt{-1} \hbar \delta_{i, j-n}}$ if $1 \leq i \leq j \leq 2n$; and $q_{ij} = q_{ji}^{-1}$ if $1 \leq j < i \leq 2n$.
\end{corollary}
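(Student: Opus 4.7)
The plan is to reduce Corollary \ref{Corollary 6.4} to a direct application of the matrix-element formula (\ref{Equation 6.6}) in the basis $\{\sigma_\hbar^{[m]}\}_{[m] \in \mathbb{Z}_k^n}$ of $\mathcal{H}^\hbar$, specialised to the pure Fourier modes $f_i$ and $g_i$. The key input is their fibrewise Fourier coefficients, each concentrated at a single lattice point: $(\widehat{f_i})_p(y) = \delta_{p, e_i}$ (since $f_i = e^{2\pi\sqrt{-1} x^i}$ is independent of $y$ and purely the $p = e_i$ mode in $x$), and $(\widehat{g_i})_p(y) = \delta_{p, 0}\, e^{2\pi\sqrt{-1} y^i}$ (since $g_i$ contributes only the $p = 0$ mode in $x$), where $e_i \in \mathbb{Z}^n$ denotes the $i$th standard basis vector.

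Substituting these into (\ref{Equation 6.6}) and applying to $\sigma_\hbar^{[r]}$, each double sum collapses to a single non-vanishing term. For $U_i^\hbar$, the condition $\delta_{m-m', e_i}$ forces $m' = m - e_i$, yielding the shift $U_i^\hbar \sigma_\hbar^{[r]} = \sigma_\hbar^{[r + e_i]}$; for $V_j^\hbar$, the condition $\delta_{m-m', 0}$ forces $m' = m$, yielding the diagonal clock action $V_j^\hbar \sigma_\hbar^{[r]} = e^{2\pi\sqrt{-1} \hbar r_j}\sigma_\hbar^{[r]}$. Representative-independence of the phase on $\mathbb{Z}_k^n$ is immediate from $k\hbar = 1$ and is the only bookkeeping point requiring an explicit check.

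The three commutation relations then follow by inspection of these shift and clock formulae. Shifts commute pairwise and diagonal operators commute pairwise, giving $U_i^\hbar U_j^\hbar = U_j^\hbar U_i^\hbar$ and $V_i^\hbar V_j^\hbar = V_j^\hbar V_i^\hbar$. For the mixed relation, shifting the label $r$ by $e_i$ before applying $V_j^\hbar$ increments the argument of the phase by exactly $\delta_{ij}$, producing precisely the factor $e^{2\pi\sqrt{-1}\hbar\delta_{ij}}$ that intertwines $U_i^\hbar V_j^\hbar$ and $V_j^\hbar U_i^\hbar$, which is the asserted Weyl-type relation.

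For the final representation-theoretic assertion, each $U_i^\hbar$ is invertible (explicitly $(U_i^\hbar)^{-1} = Q_{\overline{f_i}}^\hbar$ is the reverse shift, and indeed $(U_i^\hbar)^k = \mathrm{Id}$), and each $V_j^\hbar$ is diagonal with unit-modulus eigenvalues and hence invertible. The assignment $u_i \mapsto U_i^\hbar$, $u_{n+j} \mapsto V_j^\hbar$ thus sends every defining relation of the quantum torus with matrix $(q_{ij})$ in the statement to a true identity in $\operatorname{End}_\mathbb{C}\mathcal{H}^\hbar$, and extends uniquely to the claimed representation. No genuine obstacle arises: the whole corollary is a single application of (\ref{Equation 6.6}) exploiting the fact that the Fourier spectra of $f_i$ and $g_j$ consist of a single lattice mode each.
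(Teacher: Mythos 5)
Your overall route is the same as the paper's: specialise formula (\ref{Equation 6.6}) to the single Fourier modes $f_i$ and $g_j$ and read off the action on the basis $\{\sigma_\hbar^{[m]}\}$. Your fibrewise Fourier coefficients are right, and the shift/clock formulas you derive, $U_i^\hbar \sigma_\hbar^{[r]} = \sigma_\hbar^{[r+e_i]}$ and $V_j^\hbar \sigma_\hbar^{[r]} = e^{2\pi\sqrt{-1}\hbar r_j}\sigma_\hbar^{[r]}$, are exactly what (\ref{Equation 6.6}) gives (and they agree with the paper's toy-model heuristic $e^{2\pi\sqrt{-1}x}\mapsto S_\hbar$, $e^{2\pi\sqrt{-1}y}\mapsto$ multiplication). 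The two ``pure'' relations $U_i^\hbar U_j^\hbar = U_j^\hbar U_i^\hbar$ and $V_i^\hbar V_j^\hbar = V_j^\hbar V_i^\hbar$ then follow as you say.

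The issue is your last step. From your formulas one gets $V_j^\hbar \circ U_i^\hbar = e^{2\pi\sqrt{-1}\hbar\delta_{ij}}\, U_i^\hbar \circ V_j^\hbar$, i.e. $U_i^\hbar \circ V_j^\hbar = e^{-2\pi\sqrt{-1}\hbar\delta_{ij}}\, V_j^\hbar \circ U_i^\hbar$, which carries the phase on the opposite side from the relation asserted in the corollary; you declare this to be ``the asserted Weyl-type relation'' without checking which composite acquires the factor, and for $i=j$ and $k>2$ the two versions genuinely differ. For comparison, the paper's own proof reaches the stated phase by computing as though $U_i^\hbar$ were the diagonal (clock) operator and $V_j^\hbar$ the shift, i.e. the opposite assignment to yours, which conflicts with (\ref{Equation 6.6}); so there is an internal sign/labelling discrepancy in the paper that your write-up silently smooths over rather than resolves. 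The discrepancy is immaterial for the qualitative conclusion — replacing the parameter matrix $(q_{ij})$ by its entrywise inverse (equivalently swapping the roles of $f_i$ and $g_i$) still yields a quantum-torus representation, and your invertibility remarks are fine — but as a proof of the literal statement your final identification does not follow from your own formulas: you should either exhibit the relation with the phase on the side your computation produces and note that the stated matrix must be inverted accordingly, or trace the sign through (\ref{Equation 6.6}) explicitly instead of asserting agreement.
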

\begin{proof}
	For $i \in \{1, ..., n\}$, let $\mathbf{1}_i = (0, ..., 0, 1, 0, ..., 0) \in \mathbb{Z}^n$, where $1$ is in the $i$th entry. Fix a quantum state $s = \sum_{[m] \in \mathbb{Z}_k^n} s_{[m]} \sigma_\hbar^{[m]} \in \mathcal{H}^\hbar$. Fix $i, j \in \{1, ..., n\}$. By direct calculation using (\ref{Equation 6.6}),
	\begin{align*}
	U_i^\hbar U_j^\hbar s = & \sum_{[m] \in \mathbb{Z}_k^n} e^{2\pi \sqrt{-1} \hbar (m_i + m_j)} s_{[m]} \sigma_\hbar^{[m]} = U_j^\hbar U_i^\hbar s,\\
	V_i^\hbar V_j^\hbar s = & \sum_{[m] \in \mathbb{Z}_k^n} s_{[m - \mathbf{1}_i - \mathbf{1}_j]} \sigma_\hbar^{[m]} = V_j^\hbar V_i^\hbar s,\\
	V_j^\hbar U_i^\hbar s = & \sum_{[m] \in \mathbb{Z}_k^n} e^{2\pi \sqrt{-1}\hbar (m_i - \delta_{i, j})} s_{[m-\mathbf{1}_j]} \sigma_\hbar^{[m]}\\
	= & e^{-2\pi \sqrt{-1} \hbar \delta_{ij}} \sum_{[m] \in \mathbb{Z}_k^n} e^{2\pi \sqrt{-1} \hbar m_i} s_{[m - \mathbf{1}_j]} \sigma_\hbar^{[m]} = e^{-2\pi \sqrt{-1} \hbar \delta_{ij}} U_i^\hbar V_j^\hbar s.
	\end{align*}
\end{proof}




\keywords{}

\bibliographystyle{amsplain}
\bibliography{References}

\end{document}